\newtheorem{theorem}{Theorem}
\newtheorem{Theorem}{Theorem}[section]
\newtheorem{Corollary}{Corollary}[section]
\newtheorem{Proposition}{Proposition}[section]
\newtheorem{Lemma}{Lemma}[section]
\newtheorem{Claim}{Claim}[section]
\theoremstyle{Definition}
\newtheorem{Definition}{Definition}[section]
\newtheorem{Example}{Example}[section]
\theoremstyle{Remark}
\newtheorem{Remark}{Remark}[section]
\def\leaderfill{\leaders\hbox to .8em{\hss .\hss}\hfill}
\def\_#1{{\lower 0.7ex\hbox{}}_{#1}}
\def\fa{{\mathcal{F}}}
\def\vr{{\varphi}}
\def\Aut{\operatorname{{Aut}}}
\def\sing{\operatorname{{sing}}}
\def\cod{\operatorname{{cod}}}
\def\deg{\operatorname{{deg}}}
\title[Integrable deformations of foliations]{Integrable deformations of foliations:
 a generalization of Ilyashenko's result}
\author{Dominique Cerveau and Bruno Scárdua}
\address{Dominique Cerveau: Universit\'e de Rennes /CNRS
- IRMAR- UMR 6625, F 35000 - Rennes, France}
\email{dominique.cerveau@univ-rennes1.fr}
\address{Bruno Sc\'ardua: Inst. Matem\'atica,  Universidade Federal do Rio %de Janeiro.   Caixa Postal 68530,
Rio de Janeiro-RJ,  21.945-970 BRAZIL} \email{scardua@im.ufrj.br}
\subjclass[2000]{Primary 37F75, 57R30; Secondary 32M25, 32S65.}
\keywords{Holomorphic foliation, integrable form, deformation.}
\begin{document}

\maketitle

\begin{abstract}
We study analytic deformations of holomorphic differential 1-forms.
The initial 1-form is exact homogeneous and the deformation is by
polynomial integrable 1-forms. We investigate under which conditions
the elements of the deformation are still exact or, more generally,
exhibit a first integral.  Our results are related to natural
extensions of classical results of Ilyashenko on limit cycles of
perturbations of hamiltonian systems in two complex variables.
\end{abstract}
\tableofcontents

\section{Introduction and main results}
\label{section:introduction} In the year of 1969 Ilyashenko
published his PhD thesis about limit cycles of two-dimensional
 analytic ordinary differential equations (\cite{Ilyashenko}). Working
with perturbations of a hamiltonian of a complex polynomial in the
complex affine space $\mathbb C^2$. Ilyashenko's result can be
summarized as follows:

let $R(z,w)$ be a degree $n+1$ complex polynomial and $A(z,w), \,
B(z,w)$ complex polynomials of degree $n$, $t$ a complex parameter
and consider the following perturbation

\begin{equation}
\label{equation:1} \frac{dw}{dz}=-\frac{ R_z + t A}{ R_w + tB}
\end{equation}

of the hamiltonian equation

\begin{equation}
\label{equation:2} \frac{dw}{dz}= - \frac{R_z}{R_w}
\end{equation}
Let $B_{n+1}$ denote the space of coefficients of polynomials of
degree $n+1$ in $\mathbb C[x,y]$ and let $B_{n+1} ^{\prime
\prime}\subset B_{n+1}$ denote the space of those polynomials $R$
for which equation~\eqref{equation:2} has exactly $n^2$ singular
points. Finally, denote by $B_{n+1}^\prime \subset B_{n+1} ^{\prime
\prime}$ the space of those polynomials $R \in B_{n+1}^\prime$ for
which the singular points of \eqref{equation:2} lie on distinct
level sets of $R$. Then $B_{n+1}^\prime$ is a Zariski (and therefore
dense) open set in $B_{n+1}$.

Recall that a singularity of a real vector field in the real plane
is a {\it center} if it admits a neighborhood where all the
non-singular orbits are closed. In the complex world according to
\cite{Ilyashenko} a singular point of the equation (2) is said to be
a {\it center} if the foliation of its neighborhood into solutions
is topologically equivalent to the foliation of a neighborhood of
$(0,0)\in \mathbb C^2$ by curves $zw = const$ (or equivalently $z^2
+ w^2 = const$). In view of Mattei-Moussu theorem
\cite{mattei-moussu} the topological equivalence above can be
assumed to be holomorphic.

If we denote by $\Omega_n$ the space of 1-forms $\omega= A dz + B
dw$ where $A,B \in B_n$  then according to Ilyashenko
(\cite{Ilyashenko} Corollary 1) we have:

\begin{Theorem}
\label{theorem:A} Let $R \in B_{n+1} ^\prime$ and let $P$ be a
singular point of \eqref{equation:2} for which $A(P)=B(P)=0$ (i.e.,
$P$ is also a singular point of \eqref{equation:1}). If $P$ is also
a center for \eqref{equation:1} for all $t \approx 0$ then $\omega =
A dz + B dw \in \Omega_n$ is exact and in particular
\eqref{equation:1} admits a polynomial first integral.

\end{Theorem}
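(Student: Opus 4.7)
The plan is to deduce, in three steps, that the centre hypothesis forces the first Melnikov integral of $\omega$ along the vanishing cycle at $P$ to vanish, that this vanishing propagates via monodromy to every cycle in every smooth fibre of $R$, and that a polynomial-degree argument then extracts exactness of $\omega$.

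First, set $\omega_t := dR + t\omega$. The centre hypothesis together with Mattei--Moussu yields a holomorphic first integral $F_t$ of $\omega_t$ in a fixed neighbourhood $U$ of $P$, analytic in $t$. Since $R$ is Morse at $P$ (it is a simple critical point because $R\in B_{n+1}^{\prime\prime}$), its minimal local first integral agrees with $R$ up to a biholomorphism in the target, so after renormalisation I may assume $F_0=R$. Writing $F_t = R + tF_1 + t^2F_2 + \cdots$ and expanding $dF_t\wedge\omega_t \equiv 0$, the coefficient of $t$ reads
\[
dR\wedge(\omega - dF_1)=0 \quad\text{on } U.
\]
Restricting to a small fibre $\{R=c\}$ kills the $dR$-part, so $\omega|_{\{R=c\}} = dF_1|_{\{R=c\}}$; single-valuedness of $F_1$ on $U$ then forces the abelian integral
\[
I(c) \;:=\; \int_{\gamma(c)}\omega \;=\; \int_{\gamma(c)} dF_1 \;=\; 0
\]
for every small $c$, where $\gamma(c)\subset U\cap\{R=c\}$ is the vanishing cycle collapsing to $P$ as $c\to R(P)$.

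Second, I propagate this locally vanishing period globally. $I(c)$ is one branch of the period of $\omega$ along a horizontal family of cycles in the fibration $R\colon\mathbb{C}^2\setminus R^{-1}(\Sigma)\to \mathbb{C}\setminus\Sigma$, with $\Sigma$ the set of critical values of $R$. Since $R\in B_{n+1}^\prime$ has exactly $n^2$ Morse critical points on distinct fibres, classical Picard--Lefschetz theory yields both that the $n^2$ vanishing cycles generate $H_1$ of a generic fibre and that the monodromy acts transitively on this set of vanishing cycles---this is precisely what the open condition $B_{n+1}^\prime$ was designed to guarantee. Analytic continuation along suitable loops in the base carries the vanishing cycle at $P$ onto every other vanishing cycle, and the corresponding periods, being analytic continuations of the zero germ $I$, must vanish identically. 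Hence $\int_\delta\omega = 0$ for every $\delta\in H_1(\{R=c\},\mathbb{Z})$ and every regular value $c$.

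Third, vanishing of all periods of $\omega$ on every smooth fibre gives, by a standard relative-cohomology argument for the polynomial fibration $R$, a global decomposition
\[
\omega \;=\; dS + h\,dR \quad\text{on }\mathbb{C}^2,\qquad S,h\in\mathbb{C}[z,w].
\]
Since $\omega\in\Omega_n$ has coefficients of degree $\le n$ and $dR$ has coefficients of degree $n$, comparison of top homogeneous components forces $h$ to be a constant; therefore $\omega = d(S+hR)$ is exact and $R + t(S + hR)$ is a polynomial first integral of \eqref{equation:1}. The genuinely hard step is the Picard--Lefschetz propagation: both the transitivity of the monodromy on vanishing cycles and the analytic regularity of the period function are what really require $R\in B_{n+1}^\prime$, and this is where Ilyashenko's deeper input enters. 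Once these are granted, the local Mattei--Moussu / order-$t$ computation and the final degree check are essentially routine.
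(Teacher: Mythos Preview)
The paper does not give its own proof of this statement: Theorem~\ref{theorem:A} is quoted as Ilyashenko's result (Corollary~1 of \cite{Ilyashenko}) and serves only as motivation for the paper's higher-dimensional generalisations. There is therefore no proof in the paper to compare against.

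That said, your outline is essentially Ilyashenko's original strategy, and the three-step architecture (local centre $\Rightarrow$ vanishing of the abelian integral along the local vanishing cycle; monodromy propagation to all cycles; integration lemma) is the correct one. Two points deserve tightening. First, obtaining a local first integral $F_t$ that is holomorphic \emph{and analytic in $t$} is not an immediate consequence of Mattei--Moussu applied for each fixed $t$; for a Morse singularity one can argue directly via a parametric Morse lemma, but this should be said rather than assumed. Second, and more substantively, the claim that the monodromy acts transitively on the set of vanishing cycles is not a formal consequence of the definition of $B_{n+1}'$ (which only requires $n^2$ Morse points on distinct fibres): one needs in addition that the intersection graph of the vanishing cycles is connected, and establishing this is precisely the non-trivial core of Ilyashenko's argument. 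You flag this as ``Ilyashenko's deeper input'', which is honest, but the phrase ``this is precisely what $B_{n+1}'$ was designed to guarantee'' overstates how much the bare definition buys you. Finally, the ``standard relative-cohomology argument'' you invoke in Step~3 is nothing other than Theorem~\ref{theorem:B} (the integration lemma), and it already delivers $\omega=dS$ with $S\in B_{n+1}$ directly; your intermediate decomposition $\omega=dS+h\,dR$ followed by a degree comparison only works once one has a priori bounds on $\deg S$ and $\deg h$, which is again part of what the integration lemma provides.
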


This important result is based on the following
integration lemma
(cf. \cite{Ilyashenko} Theorem 1):

\begin{Theorem}[Ilyashenko's integration lemma]
\label{theorem:B} Let $R \in B_{n+1} ^{\prime \prime}$ and $\omega
\in \Omega_n$. Then $\omega$ is exact $\omega=dS$ for some $S \in
B_{n+1}$ if, and only if $\int\limits_{\gamma}\omega=0$ for all
closed curve $\gamma \subset \{ R=c\}, \forall c \in \mathbb C$.
\end{Theorem}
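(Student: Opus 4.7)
The ``only if'' direction is immediate: if $\omega = dS$ with $S \in B_{n+1}$, then $\int_\gamma \omega = 0$ along every closed loop. For the non-trivial direction, the plan is to construct the polynomial primitive of $\omega$ by integration along the fibers of $R$. I would first fix a regular value $c$ of $R$, so that the fiber $L_c := R^{-1}(c)$ is a smooth affine Riemann surface. Because $L_c$ has complex dimension $1$, the restriction $\omega|_{L_c}$ is automatically $d$-closed, and the vanishing of all its periods means that its class in $H^1(L_c, \mathbb C)$ is zero; hence there exists a holomorphic primitive $S_c$ on $L_c$ with $dS_c = \omega|_{L_c}$, unique up to an additive constant. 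To assemble these local primitives, I would choose a holomorphic section $\sigma \colon U \to \mathbb C^2$ of $R$ over an open set $U$ of regular values, normalize by $S_c(\sigma(c)) = 0$, and set $S(p) := S_{R(p)}(p)$ for $p \in R^{-1}(U)$. This $S$ is holomorphic on $R^{-1}(U)$ and satisfies $dS|_{L_c} = \omega|_{L_c}$ for every $c \in U$; equivalently, $dS = \omega + h \cdot dR$ for some holomorphic function $h$ on $R^{-1}(U)$.

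The next step is to absorb the residual term $h \cdot dR$. If $h$ were a function of $R$ alone, say $h = H'(R)$, then $\widetilde S := S - H(R)$ would satisfy $d\widetilde S = \omega$ on $R^{-1}(U)$. Once that is in place, I would extend $\widetilde S$ holomorphically across the singular fibers of $R$ by the Riemann extension theorem; here the hypothesis $R \in B_{n+1}''$ is essential, since having exactly $n^2$ critical points (the B\'ezout maximum) means that no critical values escape to infinity and that each singular fiber has only isolated singularities. A polynomial growth estimate based on $\deg A, \deg B \leq n$ then places $\widetilde S$ in $B_{n+1}$.

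The main obstacle is to show that $h$ is indeed a function of $R$ alone. This should follow from a Gelfand--Leray / Picard--Fuchs argument applied to the local system $H^1(L_c)$ over the regular values of $R$: the simultaneous vanishing of $\int_\gamma \omega$ for every cycle $\gamma$ on every regular fiber imposes strong constraints on the variation of cohomology and ought to force the ``transverse'' dependence of $h$ on the fibers to vanish. Equivalently, one may aim to prove directly that the period-vanishing hypothesis forces $d\omega = 0$, after which Poincar\'e's lemma on the simply connected $\mathbb C^2$ immediately supplies a polynomial primitive of degree at most $n+1$.
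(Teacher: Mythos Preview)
The paper does not prove this statement at all; it is quoted from \cite{Ilyashenko} as background, and the paper's own contribution is a higher-dimensional analogue (Theorem~\ref{Theorem:A}) proved by different means. So there is no ``paper's proof'' to compare against, only the paper's techniques for its related results.

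Your outline follows the same integration-along-fibers idea that the paper uses in Proposition~\ref{Proposition:relativecohomologylocal}: build a primitive $S$ fiberwise, conclude that $dS-\omega=h\,dR$, and then try to remove the $h\,dR$ term. But you have not closed the main gap. Differentiating $dS=\omega+h\,dR$ gives $d\omega=-dh\wedge dR$, so ``$h$ is a function of $R$'' and ``$d\omega=0$'' are literally the same assertion; saying the latter is an ``equivalent'' alternative is not progress. Your appeal to a Gelfand--Leray or Picard--Fuchs argument is only a name, not an argument: the vanishing of the periods of $\omega$ has already been spent in constructing $S$, and you have not explained what further constraint it places on $h$. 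This is the genuine content of Ilyashenko's lemma, and it uses in an essential way the hypothesis $R\in B_{n+1}''$ (exactly $n^2$ nondegenerate critical points) together with the degree bound $\deg\omega\le n$, via an explicit description of the relative cohomology $\Omega_n/(dB_{n+1}+\mathcal O\cdot dR)$ and the injectivity of the period map on it. None of that appears in your proposal.

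Note also that the paper's own mechanism for eliminating the coefficient $a$ in $\omega_1=a\,df+dh+\cdots$ (Proposition~\ref{Proposition:omega1df}) relies on $f$ being \emph{homogeneous} and on comparing homogeneous degrees; that shortcut is unavailable for a general $R\in B_{n+1}''$, so you cannot simply borrow it. Finally, your last step (``polynomial growth estimate'') is also only a sketch: once you know $\widetilde S$ is entire with $d\widetilde S=\omega$ polynomial of degree $\le n$, polynomiality of $\widetilde S$ is immediate by integrating term by term, but you should say so rather than invoking growth bounds.
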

In short, {\it $\omega\in \Omega_n$ is exact provided that its
restrictions to the fibers of $R \in B_{n+1}^{\prime \prime}$ are
exact.}

This paper can be seen as a natural extension of these results to
higher dimension, for the case of codimension one foliations.
Indeed, let us rewrite (2) as $R_z dz + Rw dw=0$ and (1) as $(R_z +
tA)dz + (R_w + tB)dw=0$.  Then we put $\omega_t:= dR + t(A dz + B
dw)$ and $\omega_0=dR$, so that $\omega_t$ is an analytic
deformation of $\omega_0=dR$. Also we have
equation~\eqref{equation:1} is equivalent to $\omega_0=0$ and
equation~\eqref{equation:2}  is equivalent to $\omega_t=0$.

The deformation writes $\omega_t = \omega_0 + t \omega_1$ where
$\omega_1:= Adz + B dw$. This is a {\it degree one} (in the
parameter $t$) deformation of $\omega_0=dR$ by 1-forms of degree
$n$. Using this point of view and notation we can state Ilyashenko's
result above as follows:

\begin{Theorem}[Ilyashenko]
\label{theorem:C} Let $\omega_t=dR + t \omega_1$ be a degree one
analytic deformation of the 1-form $\omega_0=dR$ where $R \in
B_{n+1}^{\prime \prime}$ and each $\omega_t$ is polynomial of degree
$n$. Then the following conditions are equivalent:

\begin{enumerate}[{\rm(i)}]
\item $\omega_t$ is also of hamiltonian type, i.e.,
$\omega_t = dR_t$ for some $R_t\in B_{n+1}^{\prime \prime}, \forall
t \approx 0$.

\item Given a singularity $P\in \mathbb C^2$ of $\omega_0=dR$ there
exists an analytic curve $P_t \colon \mathbb C,0 \to \mathbb C^2,0$
such that $P_0=P$ and $P_t$ is a center type singularity of
$\omega_t$ (the only one near $P$).

\item There is a singularity $P\in \mathbb C^2$ of $\omega_o=dR$ and
there is an analytic curve $P_t \colon \mathbb C,0 \to \mathbb C^2,
0$ such that $P_0=P$ and $P_t$ is a center type singularity of
$\omega_t$.

\end{enumerate}
\end{Theorem}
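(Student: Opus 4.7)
The implications (i) $\Rightarrow$ (ii) $\Rightarrow$ (iii) are the routine ones. Assume (i), so $\omega_t = dR_t$ with $R_t \in B_{n+1}^{\prime\prime}$ depending analytically on $t$ and $R_0=R$. Since $R \in B_{n+1}^{\prime\prime}$ has the maximal number $n^2$ of singular points, all its critical points are Morse; applying the implicit function theorem to $dR_t=0$ at a given critical point $P$ of $R$ produces an analytic family $P_t$ of (still Morse) critical points of $R_t$. By the holomorphic Morse lemma $R_t$ is conjugate to $u^2+v^2$ near $P_t$, so $P_t$ realizes the model $\{zw=\mathrm{const}\}$ of a center, giving (ii). The step (ii) $\Rightarrow$ (iii) is tautological.

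The real content is (iii) $\Rightarrow$ (i). Since $\omega_t = dR+t\omega_1$, the condition $\omega_t = dR_t$ with $R_t$ analytic in $t$ is equivalent to the existence of $S \in B_{n+1}$ with $\omega_1 = dS$ (one then takes $R_t := R+tS$). By Ilyashenko's integration lemma (Theorem~\ref{theorem:B}) this reduces to proving
\begin{equation*}
\int_\gamma \omega_1 = 0 \qquad \text{for every closed curve } \gamma \subset \{R=c\},\ c \in \mathbb{C}.
\end{equation*}
Put $c_0 := R(P)$. Since $P$ is a Morse critical point of $R$, the fibration $R$ admits vanishing cycles $\delta(c) \subset \{R=c\}$ shrinking to $P$ as $c \to c_0$, defined in a small neighborhood $U$ of $P$. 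On the other hand, the hypothesis that $P_t$ is a center of $\omega_t$ gives, by Mattei--Moussu, a holomorphic first integral $F_t$ of $\omega_t$ near $P_t$, normalized so that $F_0 = R|_U$; its level sets $\gamma_t(s) := \{F_t = s\}$ are closed leaves of $\omega_t$. Since $\omega_t$ vanishes on each such leaf and $R$ is single-valued,
\begin{equation*}
0 \;=\; \int_{\gamma_t(s)} \omega_t \;=\; \int_{\gamma_t(s)} dR \;+\; t \int_{\gamma_t(s)} \omega_1 \;=\; t \int_{\gamma_t(s)} \omega_1,
\end{equation*}
hence $\int_{\gamma_t(s)}\omega_1 = 0$ for every small $t \neq 0$. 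Letting $t \to 0$ with $s$ matched to $c$, the cycles $\gamma_t(s)$ deform continuously into the vanishing cycles $\delta(c)$, and continuity of the period integral yields $I(c) := \int_{\delta(c)} \omega_1 \equiv 0$ for all $c$ near $c_0$.

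To conclude, one must propagate this local vanishing to every cycle in every regular fiber. The period $I(c)$ extends to a multivalued holomorphic function on $\mathbb{C}$ minus the critical values of $R$; vanishing on an open set forces vanishing along every analytic continuation. By the Picard--Lefschetz formula the monodromy on $H_1(\{R=c\};\mathbb{Z})$ is generated by transvections along vanishing cycles, and when $R \in B_{n+1}^{\prime\prime}$ is generic enough the $\mathbb{Z}$-span of the monodromy orbit of a single vanishing cycle exhausts $H_1$ of a regular fiber. Thus $\int_\gamma \omega_1 = 0$ for every closed $\gamma$ in every regular fiber $\{R=c\}$, and Theorem~\ref{theorem:B} delivers $\omega_1 = dS$ with $S \in B_{n+1}$, proving (i). The chief obstacle is precisely this last monodromy step: one must confirm that the genericity encoded in $B_{n+1}^{\prime\prime}$ really is enough for the monodromy orbit of one vanishing cycle to span $H_1$ of the generic fiber. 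Once that is granted, the preceding abelian-integral manipulation is a fairly direct consequence of the center hypothesis combined with Theorem~\ref{theorem:B}.
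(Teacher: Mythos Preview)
The paper does not give its own proof of this statement: Theorem~\ref{theorem:C} is presented in the introduction as a reformulation of Ilyashenko's results (the paper's Theorems~\ref{theorem:A} and~\ref{theorem:B}), with attribution to \cite{Ilyashenko}, and serves as motivation for the higher-dimensional Theorems~A--C that follow. So there is nothing in the paper to compare your argument against directly.

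That said, your outline is a faithful sketch of Ilyashenko's original argument. The route (iii)$\Rightarrow$(i) via ``center $\Rightarrow$ vanishing of the abelian integral $\int_{\delta(c)}\omega_1$ on the associated vanishing cycle $\Rightarrow$ Picard--Lefschetz propagation to all of $H_1$ $\Rightarrow$ exactness by the integration lemma'' is precisely how \cite{Ilyashenko} proceeds, and the implications (i)$\Rightarrow$(ii)$\Rightarrow$(iii) are handled correctly.

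The gap you flag is genuine and worth making explicit. For the monodromy orbit of a \emph{single} vanishing cycle to span $H_1$ of the generic fiber, one needs the critical values of $R$ to be pairwise distinct, i.e.\ $R\in B_{n+1}^{\prime}$, not merely $R\in B_{n+1}^{\prime\prime}$. This is exactly the hypothesis under which the paper states Theorem~\ref{theorem:A} (which is the implication (iii)$\Rightarrow$(i)). Under $B_{n+1}^{\prime\prime}$ alone one can still salvage (ii)$\Rightarrow$(i): having centers at \emph{all} $n^2$ singularities yields vanishing of $\int\omega_1$ over \emph{every} vanishing cycle, and these together generate $H_1$ of the generic affine fiber (which has rank $n^2$). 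So either the hypothesis in Theorem~\ref{theorem:C} should be read as $B_{n+1}^{\prime}$ for the full three-way equivalence, or the equivalence (ii)$\Leftrightarrow$(iii) is only asserted generically. Apart from this point, your argument is correct.
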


In this paper we investigate then analytic deformations of
foliations admitting a first integral of polynomial homogeneous
type. The deformations are required to be given by integrable
1-forms following the integrability condition, since we work in
dimension $n \geq 2$. The main object of our study is then an
analytic family of holomorphic 1-forms $\{\omega_t\}_{t \in \mathbb
C,0}$ where each $\omega_t$ is a holomorphic 1-form (mostly
polynomial) on a neighborhood $U$ of the origin $0\in \mathbb C^n, n
\geq 3$. We assume that each $\omega_t$ is integrable, i.e.,
$\omega_t \wedge d \omega_t=0$ so that $\omega_t$ defines a
codimension one holomorphic foliation off its singular set
$\sing\omega_t=\{p \in U, \, \omega_t(p)=0\}$. We write $\omega_t =
\omega_0 + \sum\limits_{j=1}^\infty t^j\omega_j$ where $\omega_0$ is
an integrable 1-form. The deformation is called {\it degree one
deformation} when $\omega_t = \omega_0 + t \omega_1$. We shall
consider the case where $\omega_0$ admits a first integral
$\omega_0=df$, which is assumed to be polynomial or holomorphic in a
neighborhood of the origin. Another possibility is to  investigate
the case where $\omega_0$ is logarithmic $\omega_0 = (f_1\ldots
f_{r+1})\sum\limits_{j=1} ^{r+1} \lambda _j df_j / f_j$ where the
$f_j$ are like $f$ above and $\lambda_j \in \mathbb C$. This case
will be considered in a forthcoming work.

\subsection{Singular Frobenius, cycles and persistence of first integrals}

According to a well-known theorem of Malgrange (\cite{malgrangeI}) a
germ of holomorphic 1-form $\omega$ satisfying the integrability
condition $\omega\wedge d \omega=0$ admits a germ of holomorphic
first integral, at a singular point where the singular set has
codimension $\geq 3$. Since this condition in the singular set is
stable under small deformations, we conclude that any small
deformation of such object admits a holomorphic first integral. Our
Theorem~\ref{Theorem:A} below  gives an extension of this last
conclusion for the case where the singular set has codimension $\geq
2$, but with normal crossings condition. Let us be more precise. The
center persistence condition in Ilyashenko's results is equivalent
in our framework to the vanishing of some line integrals associated
to the deformation. Such conditions are automatically satisfied
under some non-degeneracy and irreducibility hypotheses on the first
integral of $\omega_0$. Our conditions look like the vanishing of
the Melnikov functions of the deformation (\cite{movasati} page 11)
and they are intrinsic.

Let us state our main results. Our first result is strongly related
to Ilyashenko's theorem (Theorem~\ref{theorem:A}).
\begin{theorem}
\label{Theorem:A}
 Let $f=f_1\ldots f_{r+1}$ be a product of
irreducible homogeneous polynomials $f_j \in \mathbb C[x_1,...,x_n]$
with $<f_i,f_j>=1$ for $i  \ne j$. Assume that the corresponding
germ induced by $f$ at the origin, has only normal crossings
singularities except for a codimension $\geq 3$ analytic subset. Let
$\omega_t = df + \sum\limits_{j=1}^\infty t^j \omega_j$ be an
analytic deformation of $\omega_0=df$ such that:

\begin{enumerate}[{\rm(i)}]
\item Each $\omega_t$ is a polynomial integrable 1-form.

\item We have  $\deg \omega_t \leq \deg \omega_0, \forall t$.

\end{enumerate}

Then the following conditions are equivalent:

\begin{enumerate}[{\rm(a)}]

\item $\omega_t$ is exact for each $t\approx 0$.
\item  There is a map $F_t \colon t \mapsto \mathbb C[x_1,...,x_n]$ such  that
$F_t(x_1,..,x_n)$ is a degree $\deg(F_t)\leq \deg \omega_0+1$
polynomial satisfying $dF_t = \omega_t$ for each $t \approx 0$.

\item We have $\oint_{\gamma_{c ^{(j)}}}\omega_t =0$ for a set of
generators $\{\gamma_c ^{(j)}, \, j=1,...,r\}$ of the 1-homology of
the leaf $L_c: (f=c), \forall c \ne 0$.
\item We have $\oint_{\gamma_{c ^{(j)}}}\omega_t/f =0$ for a set of
generators $\{\gamma_c ^{(j)}, \, j=1,...,r\}$ of the 1-homology of
the leaf $L_c: (f=c), \forall c \ne 0$.

\end{enumerate}

\end{theorem}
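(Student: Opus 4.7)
The plan is to establish the cycle (b)$\Rightarrow$(a)$\Rightarrow$(c)$\Rightarrow$(b) together with (c)$\Leftrightarrow$(d). Three implications are routine: (b)$\Rightarrow$(a) is tautological; (a)$\Rightarrow$(c) because an exact holomorphic 1-form has vanishing integral over every closed curve in its domain; and (c)$\Leftrightarrow$(d) because $f$ is constant equal to $c\ne 0$ along $L_c$, so $(\omega_t/f)|_{L_c}=c^{-1}\omega_t|_{L_c}$ and the two period integrals differ by the nonzero scalar $c^{-1}$.

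The substantive implication is (c)$\Rightarrow$(b). I expand $\omega_t=df+\sum_{k\ge 1}t^k\omega_k$ and argue by induction on $k$ that each $\omega_k$ is exact with a polynomial primitive $F_k$ of degree $\le\deg f$, so that $F_t:=\sum_k t^kF_k$ satisfies $dF_t=\omega_t$ and gives (b). Assume inductively $d\omega_j=0$ for $j<k$. The order-$t^k$ coefficient of $\omega_t\wedge d\omega_t=0$ then reduces to $df\wedge d\omega_k=0$, so by the de Rham--Saito division lemma $d\omega_k=df\wedge\eta_k$ on the smooth locus of $f$. Restricting to a smooth leaf $L_c$ yields $d(\omega_k|_{L_c})=0$, whence $\omega_k|_{L_c}$ is a closed holomorphic 1-form. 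Expanding $\oint_\gamma\omega_t=0$ in powers of $t$ gives $\oint_\gamma\omega_k=0$ for every cycle $\gamma\subset L_c$; together with closedness and de Rham's theorem this makes $\omega_k|_{L_c}$ exact. Picking an algebraic transversal section to the pencil $\{L_c\}_{c\ne 0}$ and normalizing the leaf-wise primitive along it produces, via the algebraicity of the construction and the bound $\deg\omega_k\le\deg f-1$, a polynomial function $F_k$ of degree $\le\deg f$ with $dF_k|_{L_c}=\omega_k|_{L_c}$ for every $c\ne 0$. By construction, $\omega_k-dF_k$ annihilates every tangent space to a leaf of $\omega_0$ and therefore equals $g_k\,df$ for some polynomial $g_k$.

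The principal obstacle is showing that $g_k$ depends only on $f$: then $g_k\,df=dh_k(f)$ is absorbed into $F_k$ to give $\omega_k=dF_k$. The freedom $F_k\mapsto F_k+h(f)$ changes $g_k\mapsto g_k+h'(f)$, so one must exploit additional rigidity to kill the dependence of $g_k$ on directions transverse to the fibers of $f$. This is where the hypotheses on $f=f_1\cdots f_{r+1}$ enter crucially: the irreducibility and pairwise coprimality of the factors, their joint homogeneity, the normal-crossings structure with higher singularities of codimension $\ge 3$, and the coupling between orders produced by integrability of the full deformation $\omega_t$ together force $g_k=g_k(f)$. Once this is established, the inductive step closes, completing the construction of the polynomial primitive $F_t$ whose degree bound $\deg F_t\le\deg f$ follows from $\deg\omega_t\le\deg\omega_0=\deg f-1$.
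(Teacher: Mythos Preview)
Your overall architecture matches the paper's: the trivial implications are correct, and for (c)$\Rightarrow$(b) you correctly set up the induction and reduce the order-$t^k$ integrability condition to $df\wedge d\omega_k=0$ under the inductive hypothesis that $\omega_1,\dots,\omega_{k-1}$ are closed. The execution of the inductive step, however, has a genuine gap, and you have misdiagnosed where it lies.

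You assert that leaf-wise integration, normalized along an algebraic transversal, produces a polynomial $F_k$ of degree $\le\deg f$ ``via the algebraicity of the construction and the bound $\deg\omega_k\le\deg f-1$''. This is precisely the substance of the step and it is not proved: for each fixed $c\ne 0$ the primitive on $L_c$ is a regular function, hence the restriction of \emph{some} polynomial, but there is no a priori degree control and no reason the leaf-wise primitives glue to a single polynomial on $\mathbb C^n$ as $c$ varies; the extension across $(f=0)$ is exactly where the normal-crossings hypothesis and L\^e--Saito enter. The paper supplies this as Proposition~\ref{Proposition:relativecohomologylocal} (a local relative-cohomology computation yielding $\omega_k=a\,df+dh+\sum_j\lambda_j f\psi_j(f)\theta_j$ with $a,h\in\mathcal O_n$) followed by Proposition~\ref{Proposition:omega1df} (homogeneity of $f$ plus $\deg\omega_k\le\nu$ lets one truncate to $a$ \emph{constant} and $h$ polynomial of degree $\le\nu+1$); condition~(c) then kills the $\lambda_j$, giving $\omega_k=d(af+h)$. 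Conversely, what you call the ``principal obstacle'' evaporates once the degree bound on $F_k$ is in hand: if $\deg F_k\le\nu+1$ then $\omega_k-dF_k=g_k\,df$ has degree $\le\nu$, and since $df$ is \emph{homogeneous} of degree exactly $\nu$ the factor $g_k$ is forced to be a constant, hence $g_k\,df=d(g_kf)$ trivially. No ``coupling between orders'' of the deformation is required, contrary to your final paragraph; only the order-$k$ equation and the inductive hypothesis are used.
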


\begin{Remark}{\rm
First we remark that in the course of the proof of
Theorem~\ref{Theorem:A} it will be recursively established that the
1-forms $\omega_t$ are closed in the fibers of $f$. Indeed, it will
be first observed that, by the integrability condition, $\omega_1$
is closed in the fibers of $f$. Then, the condition
$\oint_{\gamma_{c ^{(j)}}}\omega_t/f=0$ will be used to prove that
$\omega_2$ is closed in the fibers of $f$ and so on.  Therefore,
there is no ambiguity in the  above integrals $\oint_{\gamma_{c
^{(j)}}}\omega_t$ and $\oint_{\gamma_{c ^{(j)}}}\omega_t/f$.

Let $f=f_1\ldots f_{r+1}\in \mathbb C[x_1,...,x_n], n \geq 2$ be as
in Theorem~\ref{Theorem:A}. We shall see (cf.
Lemma~\ref{Lemma:1homology}) that there are generators
$\{\theta_1,...,\theta_r\}$ of the 1-homology of the non-singular
fibers $L_c : (f=c), c \ne 0$ which are of the form $\theta_j=
\sum\limits_{k=1}^{r+1} \lambda_k ^{(j)} df_k / f_k$ for a suitable
choice of the coefficients $\lambda_k ^{(j)}\in \mathbb C$.

The integral condition cannot be dropped in Theorem~\ref{Theorem:A}
(cf. Example~\ref{Example:cycle}). Some additional remarks about
Theorem~\ref{Theorem:A} are:
\begin{enumerate}
\item If $\omega_t$ is homogeneous for all $t$ then the first
integral is homogeneous as well.

\item We actually prove that each $\omega_t$ is exact which is a
sort of generalization of Ilyashenko's integration lemma
(Theorem~\ref{theorem:B}).

\item If $f=f_1$ is irreducible, reduced  and has only normal crossings singularities
 off a codimension $\geq 3$ analytic subset, then the non-singular fibers
$L_c: (f=c), c \ne 0, c \approx 0$ are simply-connected (cf.
Lê-Saito's theorem in \cite{Le-Saito} or else
Theorem~\ref{Theorem:LeSaito}). Therefore  the integral condition
$\oint_{\gamma_c}\omega_t=0$ is automatically verified.

\end{enumerate}

More precisely we have:

\begin{Corollary}
\label{Corollary:1} Let $P\in \mathbb C[x_1,...,x_n]$ be a
homogeneous polynomial, $n \geq 3$. Assume that $P$ is irreducible
and $X_P: (P=0)\subset \mathbb C^n$ has only normal crossings type
singularities outside of a codimension $\geq 3$ subset in a
neighborhood of the origin  of $\mathbb C^n$. Then any analytic
deformation $\omega_t$ of $\omega_0=dP$ by polynomial integrable
1-forms, of degree $\deg(\omega_t) \leq \deg(dP)$, also exhibits
polynomial first integrals for $t$ close to $0$. Indeed, there is an
analytic family of polynomials $P_t\in \mathbb C[x_1,...,x_n]$ of
degree $\deg(P_t)\leq \deg(P)$ such that $P_0=P$ and $\omega_t=dP_t,
\forall t \approx 0$.

\end{Corollary}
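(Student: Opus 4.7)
The plan is to derive this corollary directly from Theorem~\ref{Theorem:A} by specializing to the case of a single irreducible factor, that is, by taking $f := P$ and $r+1 = 1$ (so $r = 0$) in the notation of that theorem. The coprimality condition $\langle f_i,f_j\rangle = 1$ is then vacuous, and the irreducibility, homogeneity, and normal-crossings-off-codimension-$\geq 3$ hypotheses of Corollary~\ref{Corollary:1} are precisely those demanded of $f$ in Theorem~\ref{Theorem:A}. The deformation $\omega_t$ of $\omega_0 = dP$ satisfies the polynomiality, integrability, and degree constraints (i), (ii) by assumption.

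The crucial observation is that when $r = 0$ there are no generators of the $1$-homology of the fibers to test against. More precisely, as recorded in Remark~(3) following Theorem~\ref{Theorem:A}, L\^e--Saito's theorem (see Theorem~\ref{Theorem:LeSaito}) guarantees that for an irreducible reduced homogeneous $P$ whose zero set has only normal crossings off a codimension~$\geq 3$ analytic subset, the nonsingular fibers $L_c:(P=c)$ with $0 \neq c \approx 0$ are simply connected; in particular $H_1(L_c;\mathbb{Z}) = 0$. Consequently the integral conditions (c) and (d) of Theorem~\ref{Theorem:A} are vacuously satisfied: there is no cycle on which to integrate $\omega_t$ or $\omega_t/f$.

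Invoking the equivalence (c)$\Leftrightarrow$(b) in Theorem~\ref{Theorem:A}, we obtain an analytic family $t \mapsto P_t \in \mathbb{C}[x_1,\dots,x_n]$ with $\deg(P_t) \leq \deg(\omega_0)+1 = \deg(P)$ and $dP_t = \omega_t$ for every $t$ close to $0$. The integration constants being free parameters, one fixes them analytically in $t$ by, e.g., requiring $P_t(0) = P(0)$, which yields $P_0 = P$ as required. Since everything reduces to a direct application of Theorem~\ref{Theorem:A} in its degenerate ($r=0$) case, there is no genuine obstacle; the only points meriting care are the degree bookkeeping (noting that $\deg(\omega_0) = \deg(P) - 1$) and the analytic normalization of the integration constant as a function of $t$.
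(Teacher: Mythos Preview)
Your proposal is correct and follows essentially the same approach as the paper: both argue that the corollary is a direct consequence of Theorem~\ref{Theorem:A}, invoking L\^e--Saito's theorem to conclude that the nonsingular fibers are simply connected so that the integral condition (c) holds vacuously. The paper's proof adds a sentence recalling that in the irreducible case the relative cohomology equation $d\omega_1 \wedge dP = 0$ yields solutions $\omega_1 = a_1\,dP + dh_1$ with $a_1$ constant and $h_1$ polynomial of the right degree (i.e., the $\theta_j$-terms from Proposition~\ref{Proposition:omega1df} are absent), but this is precisely what your appeal to the $r=0$ case of Theorem~\ref{Theorem:A} encodes.
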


} \end{Remark}

The condition $\deg(\omega_t) \leq \deg(\omega_0)$ cannot be
dropped. Indeed, consider $\omega_t = dy + ty dx$ on $\mathbb C^2$.
Then $\omega_0=dy$ but $\omega_t$ is not closed for $ t\ne 0$. The
point is that the degree of $\omega_t$ is $1$ for each $t \ne 0$,
while $\omega_0$ has degree $0$. Moreover, the family of examples
$\omega_t=d(xyz) +t(xyz) (adx/x + bdy/y +cdz/z), a, b, c \in \mathbb
C$ shows that the irreducibility hypothesis on $P=P_{\nu+1}$ cannot
be dropped.

Another interesting application of our techniques is the following
result. It has already been proved in \cite{Cerveau-Mattei} with
more geometrical arguments, based on Deligne's theorem
(\cite{deligne}), holonomy arguments (\cite{mattei-moussu}) and some
desingularization techniques (\cite{C-LN-S1}). Here we present a
proof using our techniques of deformation.

\begin{theorem}
\label{Theorem:B} Let $\Omega$ be a germ of  integrable holomorphic
1-form at the origin $0 \in \mathbb C^n, n \geq 3$. Assume that the
first jet of $\Omega$  is of the form $\Omega_\nu=dP_{\nu+1}$ for
some irreducible homogeneous polynomial $P_{\nu+1}\in \mathbb
C[x_1,...,x_n]_{\nu+1}$ having only normal crossings singularities
outside of a codimension $\geq 3$ subset. Then $\Omega$ also admits
a holomorphic first integral in a neighborhood of the origin.

\end{theorem}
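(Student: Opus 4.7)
The plan is to adapt the deformation-theoretic mechanism behind the proof of Theorem~\ref{Theorem:A} to the local holomorphic setting and use it to build a formal first integral of $\Omega$ order by order in the homogeneous expansion, then prove convergence. Write $\Omega = dP_{\nu+1} + \Omega_{\nu+1} + \Omega_{\nu+2} + \cdots$ in homogeneous components. It is convenient to organize the induction along the homothety: setting $h_t(x)=tx$ and $\omega_t := t^{-(\nu+1)}h_t^{\ast}\Omega = dP_{\nu+1} + t\,\Omega_{\nu+1} + t^{2}\,\Omega_{\nu+2} + \cdots$, one obtains an analytic deformation of $\omega_0=dP_{\nu+1}$ by integrable holomorphic $1$-forms, with $\omega_1=\Omega$; the $t^{k}$-order in this family corresponds to the homogeneous degree-$(\nu+k)$ component of $\Omega$, so the deformation is merely a bookkeeping device that places the argument squarely within the framework of Theorem~\ref{Theorem:A}.

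I would then look for $F = P_{\nu+1} + \sum_{k\geq 1} F_k$ and $u = 1 + \sum_{k\geq 1} u_k$ with $F_k$ and $u_k$ homogeneous polynomials of respective degrees $\nu+1+k$ and $k$, satisfying $dF = u\,\Omega$ in the formal sense. Matching homogeneous components of degree $\nu+k$ yields, for each $k\geq 1$, an equation of the form
\[
dF_k - u_k\,dP_{\nu+1} = \Psi_k,
\]
where $\Psi_k$ is a homogeneous polynomial $1$-form of degree $\nu+k$ built from $\Omega_{\nu+1},\dots,\Omega_{\nu+k}$ and from the previously constructed $u_1,\dots,u_{k-1}$. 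Expanding the integrability condition $\omega_t\wedge d\omega_t = 0$ in powers of $t$, exactly as in the proof of Theorem~\ref{Theorem:A}, forces $\Psi_k$ to be closed along the fibers of $P_{\nu+1}$. By L\^e--Saito (cited in item~(3) of the Remark following Theorem~\ref{Theorem:A}), the nonsingular fibers $\{P_{\nu+1}=c\}$ for small $c\neq 0$ are simply-connected near the origin, so $\Psi_k$ restricted to each fiber is exact. A de~Rham division argument combined with homogeneity then produces the required $F_k$ and $u_k$ of the prescribed degrees. This reproduces at each order the cohomological mechanism by which the period condition~(d) of Theorem~\ref{Theorem:A} is trivialized under the hypotheses of Corollary~\ref{Corollary:1}.

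The main obstacle is convergence: the above procedure only produces a formal first integral $F\in\mathbb{C}[[x_1,\ldots,x_n]]$, and one must show that it represents a holomorphic germ at $0$. I would run the induction while maintaining Cauchy-type majorant estimates on a fixed polydisk $D\subset\mathbb{C}^n$, along the lines of the majorant/bounded-degree arguments used inside the proof of Theorem~\ref{Theorem:A}, with the goal of geometric bounds $\|F_k\|_D \leq CR^{k}$ (and similarly for $u_k$). Such bounds force $\sum_k F_k(x)$ to converge on some neighborhood of $0$, producing the desired holomorphic first integral of $\Omega$. This is the genuinely analytic step: the preceding construction is purely algebraic and offers no a~priori guarantee that the formal primitive represents a convergent series, and the estimates furnished by the deformation techniques behind Theorem~\ref{Theorem:A} are precisely what is needed to close the gap.
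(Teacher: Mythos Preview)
Your homothety setup $\omega_t = t^{-(\nu+1)} h_t^{*}\Omega$ is exactly how the paper begins, and the formal order-by-order construction of a first integral using L\^e--Saito and the relative cohomology mechanism is sound in principle. The gap is precisely where you locate it yourself: convergence. You propose to close it with majorant estimates ``along the lines of the majorant/bounded-degree arguments used inside the proof of Theorem~\ref{Theorem:A}'', but Theorem~\ref{Theorem:A} treats polynomial deformations of \emph{bounded degree}; its ``convergence'' is a finite sum in a fixed finite-dimensional space $\mathbb C[x_1,\dots,x_n]_{\leq\nu+1}\otimes\mathcal O_1$, and no analytic estimate is ever carried out there. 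Nothing in that proof supplies the majorants you need here, where the degrees $\nu+1+k$ grow without bound.

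The paper's proof bypasses convergence altogether via an observation you are missing. For every $t\neq 0$ the map $\sigma_t(z)=tz$ is a linear automorphism of $(\mathbb C^n,0)$, and $\omega_t$ is a nonzero scalar multiple of $\sigma_t^{*}\Omega$; hence the foliations defined by $\omega_t$ and by $\Omega$ are conjugate via $\sigma_t$. Consequently, if $\omega_{t_0}$ admits a holomorphic first integral near $0$ for \emph{any single} $t_0\neq 0$, however small, then so does $\Omega=\omega_{t=1}$: one simply composes with $\sigma_{t_0}^{-1}=\sigma_{1/t_0}$. The paper obtains such a small $t_0$ by invoking the main result of \cite{Ce-Sc}, which furnishes a holomorphic $F(z,t)$ with $d_zF\wedge\omega_t=0$ and $F(z,0)=P_{\nu+1}(z)$ for $t$ near $0$; the automorphism trick then transports this to $t=1$. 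So the analytic work is entirely outsourced to \cite{Ce-Sc}, and the passage from small $t$ to $t=1$ is an algebraic conjugation, not an analytic continuation. Your route would require reproving the convergence content of \cite{Ce-Sc} from scratch, which you have not done.
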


Notice that Theorem~\ref{Theorem:B} really requires $P=P_{\nu+1}$ to
be irreducible as shown by the family of examples $\omega_t=d(xyz)
+t(xyz)^n (adx/x + bdy/y +cdz/z), a, b, c \in \mathbb C, \, n \in
\mathbb N$. Indeed, for a generic choice of the coefficients $a, b,
c$ there are no holomorphic first integrals.

\vglue.1in

Before stating our next result we recall a classical result due to
G. Reeb \cite{Reeb} (see also \cite{C-LN} page 85):
\begin{Theorem}[Reeb,\cite{Reeb}]
\label{Theorem:ReebD} Let $\omega$ be an analytic integrable 1-form
defined in a neighborhood of the origin $ 0 \in \mathbb R^n, n \geq
3$. Suppose that $\omega(0)=0$ and $\omega$ has a non-degenerate
linear part $\omega_1 = df$, i.e., $f$ is a quadratic form of
maximal rank (not necessarily of center type). Then there exist an
analytic diffeomorphism $h \colon (\mathbb R^n,0) \to (\mathbb
R^n,0)$ and an analytic function $g \colon (\mathbb R^n,0) \to
(\mathbb R,0)$ with $h^*(\omega)=gdf$.
\end{Theorem}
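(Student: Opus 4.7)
The plan is to construct an analytic first integral for $\omega$ whose quadratic part is $f$, then normalize it via the analytic Morse lemma, and finally extract the multiplier by analytic division. I would proceed in four steps.

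\emph{Step 1 (Formal first integral).} Expand $\omega = df + \omega_2 + \omega_3 + \cdots$ by homogeneous degree of the coefficients, and seek a formal power series $F = f + F_3 + F_4 + \cdots$ with $\omega \wedge dF = 0$. Grading by total degree yields at each level a cohomological equation of shape $df \wedge dF_k = \beta_k$, where $\beta_k$ is a polynomial $2$-form built from previously determined data. Since $f$ is quadratic of maximal rank, its partial derivatives $\partial_1 f, \ldots, \partial_n f$ form a regular sequence in $\mathbb{R}[x_1, \ldots, x_n]$, so the associated Koszul complex $(\Omega^\bullet_{\mathrm{pol}}, df \wedge \cdot)$ is exact in positive degree. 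The solvability condition on $\beta_k$ is the vanishing of its class in this Koszul complex, which follows inductively from the integrability relation $\omega \wedge d\omega = 0$.

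\emph{Step 2 (Convergence).} The formal series $F$ converges analytically in a neighborhood of the origin. The non-degeneracy of $df$ provides a bounded right inverse to the Koszul differential $df \wedge \cdot$ on a fixed complement of its kernel, and one obtains geometric estimates on the homogeneous pieces $F_k$ by a standard majorant-series argument; alternatively, one may invoke Malgrange's convergence theorem for formal solutions of involutive analytic systems.

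\emph{Step 3 (Morse normalization).} The germ $F$ so obtained satisfies $F(0) = 0$, $dF(0) = 0$, and its Hessian at $0$ equals that of $f$, which is non-degenerate of maximal rank. The analytic Morse lemma then produces an analytic diffeomorphism $h \colon (\mathbb{R}^n, 0) \to (\mathbb{R}^n, 0)$ with $h(0) = 0$ and $F \circ h = f$.

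\emph{Step 4 (Extraction of $g$).} Because $F$ is a first integral of $\omega$ and $dF = df + O(|x|^2)$ has the same nonzero linear part as $\omega$, de Rham's analytic division lemma applied to $\omega$ and $dF$ (whose common zero set is the isolated point $\{0\}$) yields $\omega = g\cdot dF$ for a unique analytic function $g$ with $g(0) = 1$. Pulling back by $h$,
\[
h^*\omega \;=\; (g \circ h)\, d(F \circ h) \;=\; (g \circ h)\, df,
\]
so taking the function of the statement to be $g \circ h$ concludes the argument.

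The main obstacle is the convergence in Step 2: the formal construction via Koszul cohomology is essentially algebraic, but passing to genuine analyticity demands either Malgrange-type estimates or a careful majorant argument exploiting the uniform bound on the inverse of $df \wedge \cdot$ coming from maximal rank. A secondary subtlety is that for indefinite $f$ the level sets are non-compact and can fail to be simply connected in low dimensions, which is why I favor the formal-series approach over an integration-on-fibers argument.
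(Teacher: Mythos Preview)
Your proposal is correct, but it takes a different route from the paper's. The paper does not give a direct proof of Reeb's theorem; it recalls the result and then observes that the Corollary following Theorem~\ref{Theorem:B} recovers it (a non-degenerate real quadratic $f$ complexifies to $\sum z_j^2$, which is irreducible with an isolated singularity for $n\geq 3$, hence satisfies the hypotheses). The paper's proof of that Corollary, and thus of Reeb's theorem in its framework, is: complexify $\omega$, apply Theorem~\ref{Theorem:B} --- whose proof uses the homothety deformation $\omega_t = t^{-(\nu+1)}\sigma_t^*(\Omega)$ together with the integration-on-fibers result of \cite{Ce-Sc}, relying on the simple-connectedness of the Milnor fibers via L\^e--Saito --- and then return to the real setting. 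Your Steps~3--4 (Morse normalization and division) are implicit in the paper as well, since the Corollary only produces a first integral perturbing $f$; the genuine divergence is in Steps~1--2, where you obtain the first integral by Koszul exactness for the regular sequence $(\partial_1 f,\ldots,\partial_n f)$ plus a majorant/Malgrange convergence argument, rather than by complexification and fiber topology. Your approach is more self-contained and closer to the classical proofs of Reeb and Malgrange; the paper's approach is the one that generalizes to the higher-degree irreducible homogeneous polynomials that are its real subject. Your closing remark about non-simply-connected real level sets is precisely the obstruction the paper circumvents by passing to $\mathbb C$ first.
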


 We stress that the singularity is not necessarily of center type.

The above theorem has some version for $\omega$ of class $C^2$ but
demanding that the singularity is of center type.

In our case we shall consider some versions of Reeb's theorem above.
We shall work with holomorphic integrable 1-forms of type $\Omega=
dP + \Omega^\prime$ where $P$ is a homogeneous irreducible
polynomial, and $\Omega^\prime$ is a 1-form of higher order terms
than $dP$. Under some hypotheses on $P$ we shall conclude that also
$\Omega$ admits a first integral which is a perturbation of $P$.
This includes for instance the case $P=\sum\limits_{j=1}^n x_j ^d,
\, n \geq 3, d \geq 2$ a so called {\it Pham polynomial}. Given a
polynomial $P \in \mathbb R[x_1,...,x_n]$ we denote by $P^{\mathbb
C} \in \mathbb C[z_1,...,z_n]$ its complexification where $z_j= x_j
+ \sqrt{-1} y_j$. As for the real analytic case we can state:

\begin{Corollary}
Let $\omega$ be an analytic integrable 1-form defined in a
neighborhood of the origin $ 0 \in \mathbb R^n, n \geq 3$. Suppose
that $\omega(0)=0$ and $\omega$ has a first jet of the form
 $\omega_\nu = dP_{\nu+1}$ where $P_{\nu+1}$ is a homogeneous polynomial
 of degree $\nu + 1 \geq 2$. Assume that:
 \begin{enumerate}
\item $P^{\mathbb C} _{\nu+1}=0$ has only normal type singularities except for a
codimension $\geq 3$ subset;
\item $P^{\mathbb C} _{\nu+1}$ is irreducible in $\mathbb C[x_1,...,x_n]$.
\end{enumerate}
Then $\omega$ also admits an analytic first integral $f \colon
(\mathbb R^n,0) \to (\mathbb R,0)$ which is a perturbation of
$P_{\nu+1}$.
\end{Corollary}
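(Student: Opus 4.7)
The natural approach is to reduce to the complex statement in Theorem~\ref{Theorem:B} and then descend the resulting complex first integral back to $\mathbb{R}^n$ by exploiting the real structure. First I would complexify: the real-analytic $\omega$ is the restriction to $\mathbb{R}^n$ of a germ of holomorphic integrable $1$-form $\widetilde{\omega}$ at $0\in\mathbb{C}^n$, whose Taylor series has real coefficients and whose first non-vanishing jet is $dP_{\nu+1}^{\mathbb{C}}$. Hypotheses (1) and (2) on $P_{\nu+1}^{\mathbb{C}}$ are precisely those required by Theorem~\ref{Theorem:B} applied to $\widetilde{\omega}$, so that theorem produces a holomorphic first integral $F\colon(\mathbb{C}^n,0)\to(\mathbb{C},0)$ of $\widetilde{\omega}$.

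Next I would use conjugation symmetry to manufacture a real first integral. Set $G(z):=\overline{F(\bar z)}$; since the series of $\widetilde\omega$ has real coefficients, complex-conjugating the identity $\widetilde{\omega}\wedge dF=0$ gives $\widetilde{\omega}\wedge dG=0$, so $G$ is a second holomorphic first integral with $G(0)=0$. Applying Mattei--Moussu (available because $\widetilde{\omega}$ admits a holomorphic first integral) one picks a primitive first integral $F_0$ with $F=\phi(F_0)$ and $G=\psi(F_0)$ for holomorphic germs $\phi,\psi\colon(\mathbb{C},0)\to(\mathbb{C},0)$. Then $H:=\tfrac12(F+G)=\tfrac12(\phi+\psi)(F_0)$ is again a holomorphic first integral of $\widetilde\omega$, and at a real point $x\in\mathbb{R}^n$ one has $H(x)=\tfrac12(F(x)+\overline{F(x)})=\operatorname{Re} F(x)\in\mathbb{R}$. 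Thus $f:=H|_{\mathbb{R}^n}$ is a real-analytic first integral of $\omega$.

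It remains to show $f$ is a perturbation of $P_{\nu+1}$. Writing $dF_0=g\,\widetilde{\omega}$ and expanding in homogeneous components, the lowest-order part of $d(g\widetilde\omega)=0$ forces the leading term $g_{k_0}\,dP_{\nu+1}^{\mathbb{C}}$ to be closed, hence $dg_{k_0}\wedge dP_{\nu+1}^{\mathbb{C}}=0$; irreducibility of $P_{\nu+1}^{\mathbb{C}}$ then gives $g_{k_0}=c\,(P_{\nu+1}^{\mathbb{C}})^m$ for some $m\ge 0$ and nonzero $c$, and primitivity of $F_0$ forces $m=0$. After a harmless rescaling, $F_0$ has leading term $P_{\nu+1}^{\mathbb{C}}$; the same then holds for $F$, $G$, and $H$, so $f=P_{\nu+1}+\mathrm{h.o.t.}$ on $\mathbb{R}^n$. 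The main obstacle in this program is this last step: making sure the primitive integral extracted from Theorem~\ref{Theorem:B} has leading jet proportional to $P_{\nu+1}^{\mathbb{C}}$ itself rather than some higher power of it; once this leading-term analysis is in place, the complex-conjugation averaging that produces $f$ is essentially formal.
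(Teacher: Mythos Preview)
Your approach is exactly the paper's: complexify $\omega$, apply Theorem~\ref{Theorem:B}, then descend to $\mathbb R^n$. The paper's own proof is the one-sentence sketch ``complexification of $\omega$, Theorem~\ref{Theorem:B} and then back to the real framework,'' so your conjugation--averaging construction $H=\tfrac12\big(F+\overline{F(\bar\cdot)}\big)$ and the leading-jet analysis are simply a fleshing-out of what the authors leave implicit.
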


The proof of this corollary goes as one imagines: complexification
of $\omega$, Theorem~\ref{Theorem:B} and then back to the real
framework.

The above corollary then gives a new (?) proof of Reebs's
linearization theorem mentioned above.

\subsection*{Homogeneous deformations {\rm(cf.}\,\cite{Cerveau-Mattei}{\rm)}}
 Let us now give a word about the case
of deformations by {\it homogeneous integrable 1-forms} of a 1-form
$\omega_0=df$ where $f$ is a homogeneous polynomial. In this case,
using the description of non-dicritical homogeneous integrable
1-forms given in \cite{Cerveau-Mattei} part 4, Chap. I pp 86-95 we
can rapidly describe such deformations. For instance, if it is
required that the set of separatrices $(f=0)$ is left invariant
during the deformation then such deformations are then proved to be
of logarithmic type. It is important to notice that by the use of
\cite{Cerveau-Mattei}, {\em no additional hypotheses are made on the
singular locus of $df$}. The main point is the fact that given a
homogeneous integrable 1-form $\omega$ then either $\omega(R )=0$,
where $R$ is the radial vector field, or $\omega/\omega(R )$ is a
closed homogeneous 1-form of degree $-1$. Then the description of
closed meromorphic 1-forms (cf. prop. 2.2 page 39 in
\cite{Cerveau-Mattei}) finishes the job. We may then derive  from
\cite{Cerveau-Mattei} the following conclusion:

\begin{Theorem} [cf. \cite{Cerveau-Mattei} Chap. 4.I]
\label{Theorem:3} Let $f=f_1\ldots f_{r+1}$ be a reduced product of
irreducible homogeneous polynomials in $\mathbb C^n , n \geq 2$. Let
$\omega_t= df +\sum\limits_{j=1} ^\infty t^j \omega_j$ be an
analytic deformation of $df$ by homogeneous integrable 1-forms of
same degree than $\omega_0=df$. Assume also that $(f=0)$ is
invariant for each $\omega_t$. Then $\omega_t$ is of logarithmic
type in the following sense:
\[
\omega_t/f= df/f + t \sum\limits_{k=1} ^{r+1} \lambda_k(t) df_k /
f_k
\]
for some $\lambda _k \in \mathcal O_1$.
\end{Theorem}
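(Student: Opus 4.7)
The plan is to exploit, at each fixed $t\approx 0$, the structural dichotomy for homogeneous integrable 1-forms recalled just before the statement: either $\omega_t(R)\equiv 0$, or $\omega_t/\omega_t(R)$ is a closed meromorphic 1-form homogeneous of degree $-1$, where $R$ is the radial vector field. Since $\omega_0=df$ satisfies $\omega_0(R)=(\deg f)\,f$ by Euler's identity and $\omega_t$ depends analytically on $t$, the polynomial $\omega_t(R)$ is not identically zero for $t$ close to $0$; so we sit in the second case of the dichotomy throughout a small disc in the parameter.

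The next step is to pin down $\omega_t(R)$ using invariance. The invariance of $(f=0)$ by the foliation defined by $\omega_t$ amounts to $\omega_t\wedge df=f\,\Theta_t$ for some analytic 2-form $\Theta_t$. Contracting with $R$ and using $df(R)=(\deg f)\,f$ yields
\[
\omega_t(R)\,df-(\deg f)\,f\,\omega_t = f\, i_R\Theta_t,
\]
so $f$ divides the polynomial-valued 1-form $\omega_t(R)\,df$. A short calculation, using that $f=f_1\cdots f_{r+1}$ is reduced, shows that no irreducible factor $f_k$ divides $df$ as a form; hence $f_k\mid \omega_t(R)$ for every $k$, and by reducedness again $f\mid \omega_t(R)$. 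Comparing degrees gives $\omega_t(R)=c(t)\,f$, where $c(t)$ is a scalar, analytic in $t$, with $c(0)=\deg f\neq 0$.

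Now set $\eta_t:=\omega_t/\bigl(c(t)\,f\bigr)$: it is a closed meromorphic 1-form on $\mathbb{C}^n$, homogeneous of degree $-1$, with at worst simple poles along the irreducible hypersurfaces $\{f_k=0\}$. Applying the description of closed meromorphic 1-forms given in Proposition~2.2, p.~39 of \cite{Cerveau-Mattei} one obtains
\[
\eta_t=\sum_{k=1}^{r+1}\mu_k(t)\,\frac{df_k}{f_k}+dG_t,
\]
where each residue $\mu_k(t)\in\mathbb{C}$ is constant on the irreducible hypersurface $\{f_k=0\}$ by closedness, and $G_t$ is holomorphic on $\mathbb{C}^n$. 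The remainder $dG_t$ must itself be homogeneous of degree $-1$, which forces $G_t$ to be homogeneous of degree $0$, i.e.\ constant, so $dG_t=0$. Matching at $t=0$ with $\omega_0/f=df/f=\sum_k df_k/f_k$ gives $c(0)\mu_k(0)=1$; writing $c(t)\mu_k(t)=1+t\lambda_k(t)$ with $\lambda_k\in\mathcal O_1$ and multiplying $\eta_t$ by $c(t)f$ yields exactly the announced formula $\omega_t/f=df/f+t\sum_k \lambda_k(t)\,df_k/f_k$.

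The step that really demands the hypotheses is the passage from the invariance of $(f=0)$ to the identity $\omega_t(R)=c(t)\,f$: this is where the reducedness of $f$ and the irreducibility of each $f_k$ enter, and without them one could only conclude that a proper factor of $f$ divides $\omega_t(R)$. After this point, the argument is essentially a direct application of the cited decomposition of closed meromorphic 1-forms, the homogeneity of degree $-1$ serving only to kill the exact part $dG_t$.
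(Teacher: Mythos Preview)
Your proof is correct and follows precisely the route sketched in the paragraph preceding the statement: use the dichotomy for homogeneous integrable forms to obtain that $\omega_t/\omega_t(R)$ is closed of degree $-1$, then invoke the description of closed meromorphic $1$-forms from \cite{Cerveau-Mattei}. The only thing you add beyond the paper's sketch is the explicit verification that invariance of $(f=0)$ forces $\omega_t(R)=c(t)\,f$, which is indeed the point where the hypothesis is used and which the paper leaves implicit.
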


There is indeed a more general statement where we do not require
$(f=0)$ to be invariant. Let us start with a simple remark. Write
$\omega_t= df + \sum\limits_{j=1}^\infty t^j \omega_j$ where
$f=f_1\ldots f_{r+1}$. Then $\omega(R)=df(R) + \sum\limits_{j\geq 1}
t^j \omega_j(R) = (\nu +1)f + t \psi(t)$ for some holomorphic
function $\psi(t)$. Then for $ t \ne 0$ the polynomial $\omega_t(R)$
is reduced but may have less components than $\omega_0(R)=(\nu+1)f$.
This is the case for instance of the family of polynomials $g_t:=x^2
+ y^2 + t z^2$. For $t \ne 0$ we know that $g_t$ is irreducible. On
the other hand $g_0$ has two irreducible components. Taking this
into account we may then state:

\begin{Theorem} [cf. \cite{Cerveau-Mattei} Chap. 4.I]
\label{Theorem:3'} Let $f=f_1\ldots f_{r+1}$ be a product of
irreducible homogeneous polynomials in $\mathbb C^n , n \geq 2$. Let
$\omega_t= df +\sum\limits_{j=1} ^\infty t^j \omega_j$ be an
analytic deformation of $df$ by homogeneous integrable 1-forms of
same degree than $\omega_0=df$. There is a holomorphic function
$\epsilon(t)$, with $\epsilon(0)=0$  such that for
$g(t)=\omega_{\epsilon(t)}(R)$:
\[
\omega_{\epsilon(t)}= g_t  \sum\limits_{j=1} ^{k} \lambda_j(t)
dg_{j,t}/ g_{j,t}
\]
for some $\lambda _j(t) \in \mathcal O_1$ where $g_t= g_{1,t}\ldots
g_{k,t}$.
\end{Theorem}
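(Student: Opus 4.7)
The argument exploits the dichotomy for homogeneous integrable 1-forms from \cite{Cerveau-Mattei}, Chap.~4.I: if $\omega$ is homogeneous integrable on $\mathbb{C}^n$ and $R$ denotes the radial vector field, then either $i_R\omega \equiv 0$ (dicritical case), or $\omega/i_R\omega$ is a closed meromorphic homogeneous 1-form of degree $-1$. First I would check that for small $t$ we are in the non-dicritical regime: Euler's identity gives $i_R\omega_0 = i_R(df) = (\deg f)\,f \not\equiv 0$, and since $t \mapsto i_R\omega_t$ depends holomorphically on $t$, one has $g(t) := i_R\omega_t \not\equiv 0$ for all $t$ in a neighborhood of $0$. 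The dichotomy then produces, for each such $t$, a closed meromorphic form $\omega_t/g(t)$ of degree $-1$.

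Next I would invoke the description of closed meromorphic 1-forms (Proposition~2.2, p.~39 of \cite{Cerveau-Mattei}): such a form of degree $-1$, with pole divisor given by the irreducible factorization $G = G_1\cdots G_k$, has the form $\sum_{j=1}^k \lambda_j\,dG_j/G_j$ with constants $\lambda_j \in \mathbb{C}$. Applied to $\omega_t/g(t)$, with pole divisor cut out by $g(t)$, and multiplied back by $g(t)$, this yields at each fixed $t$ a decomposition
\[
\omega_t \;=\; g(t)\sum_{j=1}^{k(t)} \lambda_j(t)\,\frac{dg_{j,t}}{g_{j,t}},
\]
where $g(t) = g_{1,t}\cdots g_{k(t),t}$ is the irreducible factorization in $\mathbb{C}[x_1,\ldots,x_n]$. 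Specializing to $t=0$ gives $\omega_0 = df = f\sum_{j=1}^{r+1} df_j/f_j$, in agreement with the initial datum.

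The main obstacle, and the reason the reparameterization $\epsilon$ is needed, is that the irreducible factorization of $g(t)$ over $\mathbb{C}[x]$ need not vary holomorphically with $t$ near $0$. The example $g(t) = x^2 + y^2 + tz^2$ quoted in the text illustrates this precisely: $k(0)=2$ while $k(t)=1$ for $t\neq 0$, and the factors $x \pm iy$ at $t=0$ do not extend to an irreducible decomposition nearby. To resolve this, I would factor $g(t)$ first as an element of the polynomial ring $\mathcal{O}_{(\mathbb{C},0)}[x_1,\ldots,x_n]$ into its irreducibles $g(t) = Q_1(x,t)\cdots Q_m(x,t)$. For generic $t \neq 0$ each $Q_i(x,t)$ may split further in $\mathbb{C}[x]$ with coefficients that are algebraic functions of $t$; a Puiseux-type ramified cover $\epsilon(t) = t^N$ (with $N$ the least common multiple of the ramification orders occurring) pulls all these algebraic branches back to single-valued holomorphic functions of $t$. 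Over this cover the factorization $g(\epsilon(t)) = g_{1,t}\cdots g_{k,t}$ becomes holomorphic with $k$ constant, and the coefficients $\lambda_j(t)$ are then recovered as residues along small loops around the smooth parts of $\{g_{j,t}=0\}$, hence holomorphic germs, i.e.\ elements of $\mathcal{O}_1$.

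The principal technical difficulty lies in this uniformization step: one must verify that the algebraic multivaluedness of the factorization of $g(t)$ is indeed resolved by a single power-map $\epsilon(t) = t^N$, that the number $k$ of factors stabilizes on the cover, and that the reassembled coefficients $\lambda_j$ collapse into holomorphic germs. Once this is in place, the formula asserted in the theorem is simply the pointwise Cerveau-Mattei decomposition rewritten in the new parameter $t$.
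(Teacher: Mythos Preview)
Your proposal is correct and follows precisely the approach sketched in the paper. The paper does not give a detailed proof of this theorem; it presents it as a direct consequence of the classification in \cite{Cerveau-Mattei}, Chap.~4.I, after recording the two key facts you use (the dichotomy $i_R\omega\equiv 0$ versus $\omega/i_R\omega$ closed of degree $-1$, and the description of such closed forms via Proposition~2.2 of \cite{Cerveau-Mattei}) and after noting, via the example $g_t=x^2+y^2+tz^2$, that the number of irreducible components of $\omega_t(R)$ may jump at $t=0$. Your Puiseux-cover argument for producing $\epsilon(t)$ is exactly the natural way to make the paper's phrase ``taking this into account'' precise, and the paper does not spell it out further than you have.
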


\subsection{Degree one polynomial deformations}
Finally  we give a first step in the  study of {\em degree one}
deformations of $\omega_0=df$ in the general case. We assume that
the deformation is given by polynomial 1-forms under the hypothesis
that the degree does not grow. No integral (vanishing cycle type)
condition is required.

We shall need a definition:
\begin{Definition}
{\rm A holomorphic function $f \colon \mathbb C^n, 0 \to \mathbb C^p
,0$ $f=(f_1,...,f_p)$ {\it satisfies the factorization property} if
for every holomorphic function $ h \colon \mathbb C^n,0 \to \mathbb
C,0$ such that $dh \wedge df_1\wedge ...\wedge df_p=0$ there is a
holomorphic function $\vr \colon \mathbb C^p,0 \to \mathbb C,0$ such
that $h = \vr \circ f$. }

\end{Definition}

We have the following condition assuring the factorization property
due to Malgrange, with the above notations:
\begin{Theorem}[\cite{malgrangeII}]
Assume that $\cod \sing (df_1 \wedge ...\wedge df_p) \geq 2$. Then
$f=(f_1,...,f_p)$ satisfies the factorization property.

\end{Theorem}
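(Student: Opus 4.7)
Set $\Sigma = \sing(df_1\wedge\cdots\wedge df_p)$, an analytic subset of codimension $\geq 2$ by hypothesis, and let $U$ be a small neighborhood of $0 \in \mathbb C^n$. On $U \setminus \Sigma$ the map $f = (f_1,\ldots,f_p)$ is a holomorphic submersion, so around any point I can complete $f_1,\ldots,f_p$ to a holomorphic coordinate system $(f_1,\ldots,f_p,g_{p+1},\ldots,g_n)$; the condition $dh \wedge df_1 \wedge \cdots \wedge df_p = 0$ then becomes $\partial h/\partial g_j = 0$ for $j > p$, so locally $h = \varphi_\alpha(f_1,\ldots,f_p)$. Because $f$ is open on $U\setminus\Sigma$, the local $\varphi_\alpha$'s agree on overlaps and glue into a single holomorphic function $\varphi$ on the open set $f(U\setminus\Sigma) \subset \mathbb C^p$ with $h = \varphi\circ f$ off $\Sigma$.

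To extend $\varphi$ to a germ at $0 \in \mathbb C^p$, I would pass to the auxiliary map $F := (f_1,\ldots,f_p,h) \colon U \to \mathbb C^{p+1}$. The wedge hypothesis forces $\rank dF \leq p$ everywhere, with equality outside $\Sigma$; hence, as a germ at $0 \in \mathbb C^{p+1}$, $F(U)$ is a pure $p$-dimensional analytic set that coincides with the graph of $\varphi$ outside the thin subset $F(\Sigma)$. The projection $\pi \colon (y_1,\ldots,y_{p+1}) \mapsto (y_1,\ldots,y_p)$ satisfies $\pi\circ F = f$ and is generically one-to-one on $F(U)$, so $\pi|_{F(U)}$ has degree one. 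Since $f$ has generic rank $p$, the image germ $f(U)$ equals the germ of $\mathbb C^p$ at the origin, and therefore $F(U)$ is the graph over a full neighborhood of $0$ of a weakly holomorphic function $\tilde\varphi$.

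The identity $\tilde\varphi \circ f = h$ on $U\setminus\Sigma$ yields $|\tilde\varphi| \leq \sup_U|h| < \infty$, so $\tilde\varphi$ is bounded on its domain. Riemann's extension theorem on the smooth ambient $\mathbb C^p$ then promotes $\tilde\varphi$ to a genuine holomorphic germ $\varphi \colon (\mathbb C^p,0)\to(\mathbb C,0)$; the equality $h = \varphi\circ f$ holds on the dense set $U\setminus\Sigma$ and hence on all of $U$ by continuity, proving the factorization property.

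The delicate step is the second one: identifying $F(U)$ as an irreducible, pure $p$-dimensional analytic germ whose projection to $\mathbb C^p$ has degree one. The codimension hypothesis on $\Sigma$ is used twice here: it ensures that $U\setminus\Sigma$ is connected so that $F(U)$ is irreducible (and the degree-one projection therefore produces a \emph{single-valued} $\tilde\varphi$), and it forces $F$ to have generic rank $p$ so that $F(U)$ is genuinely of dimension $p$ and $f$ is dominant. Relaxing $\cod\Sigma\geq 2$ to codimension one would destroy both properties simultaneously, explaining the sharpness of Malgrange's hypothesis.
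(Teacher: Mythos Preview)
The paper does not prove this statement: it is quoted from \cite{malgrangeII} and used as a black box, so there is no proof in the paper to compare against. Let me comment on the argument itself.

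There is a genuine gap at the gluing step in your first paragraph. That the local $\varphi_\alpha$ agree on $f(U_\alpha\cap U_\beta)$ is fine, but obtaining a \emph{single-valued} $\varphi$ on all of $f(U\setminus\Sigma)$ requires $h$ to be constant on each fiber $f^{-1}(y)\cap(U\setminus\Sigma)$. Since $dh$ vanishes along the fibers, $h$ is only \emph{locally} constant there; constancy then needs the fibers to be connected --- and they need not be. The paper itself supplies the example $f_2(z_1,z_2,z_3)=(z_1,\,z_1z_2+z_3^{2})$: here $\Sigma=\{z_1=z_3=0\}$ has codimension $2$, yet the fiber over $(0,b)$ with $b\neq0$ is $\{z_1=0,\ z_3=\pm\sqrt b\}$, two disjoint affine lines lying entirely in $U\setminus\Sigma$. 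Your reformulation via $F=(f,h)$ does not close this gap: the assertion that $\pi|_{F(U)}$ is ``generically one-to-one'' is exactly the statement that $h$ is constant on the generic fiber of $f$, which you have not established. Proving that the generic fiber is connected under the codimension-$2$ hypothesis is nontrivial and is essentially where the real content of Malgrange's theorem lies; connectedness of $U\setminus\Sigma$ alone (your final paragraph) does not deliver it.

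A secondary issue: you claim $F(U)$ is a pure $p$-dimensional analytic germ at $0$, but images of holomorphic map-germs are not analytic in general without a properness or finiteness hypothesis (cf.\ $(x,y)\mapsto(x,xy)$). Some additional device --- elimination, or passing to the closure of a suitable graph --- is needed before the degree of $\pi$ is even well defined.
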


The above result is not related to whether the map $f$ has connected
fibers. Indeed, $f_1(z_1,z_2,...,z_n)=(z_1,z_1 z_2)$ has connected
fibers but does not satisfy the factorization property (take
$h=z_2$). Furthermore, the map $f_2(z_1,z_2,z_3)=(z_1,z_1 z_2 +
z_3^2)$ does not have connected fibers. Nevertheless, since $dz_1
\wedge d(z_1 z_2 + z_3 ^2) = z_1 dz_1 \wedge dz_2 + 2 z_3 dz_1
\wedge dz_3$ the map $f_2$ satisfies the conditions of Malgrange's
theorem above and therefore satisfies the factorization property. In
addition to the above examples we consider $f_3=(z_1 ^2 z_2 ^3,
z_3)$. This example does not satisfy Malgrange's condition, it has a
non-irreducible component, but still verifies the factorization
property as it is easily checked. The fibers of $f_3$ are connected.
The difference with respect to $f_1$ is the fact that $f_3$ is an
open map, while $f_1$ is not. With this notions and remarks we
state:

\begin{theorem}
\label{Theorem:C} Let $f=f_1f_2$ be a product of two irreducible
homogeneous polynomials $f_1,f_2 \in \mathbb C[x_1,...,x_n]$ with
$<f_1,f_2>=1$. Assume that: \begin{enumerate}
\item The corresponding germ induced by $f$ at the origin, has only normal
crossings singularities except for a codimension $\geq 3$ analytic
subset.
\item The map $(f_1,f_2)\colon \mathbb C^n \to \mathbb C^2$
satisfies the factorization property. \end{enumerate}

 Then any affine integrable
deformation $\omega_t = df + t \omega_1$ by polynomial 1-forms of
degree $\deg(\omega_t ) \leq \deg(df)$ is of one of the following
types:
\begin{enumerate}[{\rm(a)}]
\item $\omega_t  = d(f + th)$, for some polynomial $h $ of
degree $\deg(h)\leq \deg (f)$.

\item $\omega_t=\sigma^*(\alpha_t)$ where $\sigma=(f_1,f_2)$ and
$\alpha_t= (1 + t \mu)d(xy) + t[d\big(P(x) + Q(y)\big) + \lambda
ydx]$ for some $\mu, \lambda \in \mathbb C$ and polynomials $P(x)\in
\mathbb C[x], \, Q(y) \in \mathbb C[y]$ of degree $\deg P \leq \deg
f_2, \, \deg Q \leq \deg f_1$.
\end{enumerate}

\end{theorem}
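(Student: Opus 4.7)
The strategy is to follow the proof of Theorem~\ref{Theorem:A}, but without the vanishing-period hypothesis. Since $H_1$ of a generic fiber of $f=f_1f_2$ is one-dimensional (the $r=1$ case of Lemma~\ref{Lemma:1homology}), exactly one period can survive; its contribution to $\omega_1$ is captured by an explicit model term, the remaining part is forced to be exact by the Theorem~\ref{Theorem:A} machinery, and the factorization property then pins the primitive down to the specific shape of case~(b).

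I would begin by expanding $\omega_t\wedge d\omega_t=0$ in powers of $t$ to extract $df\wedge d\omega_1=0$ and $\omega_1\wedge d\omega_1=0$. The first relation implies that $\omega_1|_{L_c}$ is closed on every non-singular fiber $L_c:=(f=c)$. For $c\neq 0$, $H_1(L_c)$ is generated by a single cycle $\gamma_c$, and the polynomial 1-form $f_2\,df_1$ restricts on $L_c$ to $c\,df_1/f_1$, hence has period $2\pi i c$ around $\gamma_c$. A scaling argument using the homogeneity of $f$ (decomposing $\omega_1=\sum_k\omega_1^{(k)}$ into its homogeneous parts and comparing $\psi(s^{\deg f}c)=\sum_k s^{k+1}\psi_k(c)$ with the holomorphic expansion of $\psi(c):=\oint_{\gamma_c}\omega_1$ at $c=0$), together with the bound $\deg\omega_1\leq\deg f-1$, forces $\psi(c)=Kc$ for some constant $K\in\mathbb{C}$. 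Setting $\lambda:=K/(2\pi i)$, the modified form $\tilde\omega_1:=\omega_1-\lambda f_2\,df_1$ is fiberwise closed (using that $df_1\wedge df_2|_{L_c}=0$) and has vanishing period on every $\gamma_c$.

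Next I would invoke the exactness step of Theorem~\ref{Theorem:A} (the polynomial analogue of Ilyashenko's integration lemma, Theorem~\ref{theorem:B}) applied to $\tilde\omega_1$, producing a polynomial $h$ with $\deg h\leq\deg f$ and $\tilde\omega_1=dh$. Thus
\[
\omega_1=dh+\lambda f_2\,df_1.
\]
When $\lambda=0$ this is case~(a), with $\omega_t=d(f+th)$. When $\lambda\neq 0$, the remaining integrability relation $\omega_1\wedge d\omega_1=\lambda\,dh\wedge df_2\wedge df_1=0$ yields $dh\wedge df_1\wedge df_2=0$, and hypothesis~(2) furnishes a polynomial $\varphi$ with $h=\varphi(f_1,f_2)$.

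Finally, the constraint $\deg h\leq\deg f$ restricts the monomials $u^iv^j$ appearing in $\varphi$ to the set $i\deg f_1+j\deg f_2\leq\deg f$, leaving only pure powers of $u$, pure powers of $v$, and the cross term $uv$; writing $\varphi(u,v)=P(u)+Q(v)+\mu uv$ and reassembling,
\[
\omega_t=\sigma^*\bigl((1+t\mu)\,d(xy)+t\bigl(d(P(x)+Q(y))+\lambda y\,dx\bigr)\bigr),
\]
which is case~(b). The main obstacle is the exactness step $\tilde\omega_1=dh$: it requires an adaptation of Ilyashenko's integration lemma to the reducible first integral $f=f_1f_2$ in dimension $\geq 3$, for which the normal-crossings hypothesis on $(f=0)$ and the irreducibility of the factors $f_j$ are essential.
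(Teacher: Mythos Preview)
Your proposal is correct and follows essentially the same route as the paper: both use the deformation equations $df\wedge d\omega_1=0$ and $\omega_1\wedge d\omega_1=0$, decompose $\omega_1$ as an exact part plus a multiple of $f_2\,df_1=f\theta$, then use the second equation to force $dh\wedge df_1\wedge df_2=0$ and apply the factorization property plus degree bounds. The only cosmetic difference is that the paper obtains the decomposition $\omega_1=a_1\,df+dh+\lambda f\theta$ by quoting Proposition~\ref{Proposition:omega1df}, whereas you re-derive it via the explicit period computation and scaling argument (and absorb the $a_1\,df$ term into $dh$).
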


\begin{Remark}{\rm Some remarks about this theorem are:
\begin{enumerate}[{\rm(1)}]
\item  One may  search for examples of the form
 $\omega_t = d(fg) + t \mu d(fg) + th dg$ what, a priori, seems to be
possible. Nevertheless, we have $d \omega_t = tdh \wedge dg$ and
then $\omega_t \wedge d \omega_t = (1+ \mu t) d(fg) \wedge dh \wedge
dg$. Therefore $\omega_t \wedge d \omega_t =0\Leftrightarrow d(fg)
\wedge dh \wedge dg=0\Leftrightarrow g df \wedge dh \wedge dg=0
\Leftrightarrow dh \wedge df \wedge dg=0$. This last condition
implies, in the case $(f,g)$ satisfies the factorization property
that  $h=H(f,g)$ for some two variables polynomial $H(x,y)$.

\item  A natural idea is to construct deformations of logarithmic type
i.e,   $\omega_t / f$ is logarithmic of the form $\omega_t / f =
\sum\limits_{j=1} ^{2} \mu_j(t) df_j /f_j$ for some holomorphic
functions $\mu_j(t)$ with $\mu_j(0)=1$. Since we are considering
degree one deformations we must have $\mu_j(t) = 1 + \mu_j t$ for
some $\mu_j \in \mathbb C$. These cases are contained in case (b) in
the statement. Indeed, let us  consider deformations of the form
$\omega_t = xy[(1 + \mu_1 t)\frac{dx}{x} + (1 + \mu_2
t)\frac{dy}{y}]$. This deformation can be rewritten as $\omega_t =
(1 + t \mu_1) d(xy) + t(\mu_2 - \mu_1) xdy$ as in case (b) in the
statement.

\item  The above result cannot be proved without the
restriction on the degree of the 1-forms $\omega_t$. This is clear
from the already introduced family of examples $\Omega=d(xyz)
+t(xyz)^2 (adx/x + bdy/y +cdz/z), a, b, c \in \mathbb C$.

\item The above result  does not hold if we allow $f$ to be
non-reduced. For seeing this consider the example $\omega_t=d(x^2 y)
+ txy dy= 2xydx + (x^2 - t xy) dy$. Put $h_t = \omega_t \cdot R =
3x^2 y - txy^2$. Then $d(\omega_t/ h_t)=0$ where $\Omega_t:=
\omega_t / h_t = \frac{1}{3} dy/y + \frac{2}{3} d(3x  - ty)/((3x -
ty)$.

\end{enumerate}
} \end{Remark}

\section{Equations of a deformation}
\label{section:equations}

Let $\omega_t$ be  a {\it deformation} of $\omega_0$ a germ of
holomorphic 1-form at the origin $0 \in \mathbb C^n$, i.e.,
$\omega_t$ is a one-parameter analytic family of germs at the origin
$0 \in \mathbb C^n$ of holomorphic 1-forms parametrized by $t \in
\mathbb D\subset \mathbb C$. We shall assume that $\omega_t$ is
integrable for each $t$, i.e., $\omega _t \wedge d \omega_t=0, \,
\forall t\in \mathbb D$. We also write

\[
\omega_t = \omega_0 + \sum\limits_{j=1} ^\infty  t^j \omega_j\,.
\]

The integrability condition $\omega_t \wedge d \omega_t=0$ gives:

\[
\omega_0 \wedge d \omega_0=0
\]
\[
\omega_0 \wedge d\omega_1 + \omega_1 \wedge d\omega_0 =0
\]
\[
\omega_2 \wedge d \omega_0 + \omega_1 \wedge d \omega_1 + \omega_0 \wedge d\omega_2=0
\]
\[
\vdots
\]

We shall consider the case where  $\omega_0$ admits a first
integral, more precisely $\omega_0 = df$ for some holomorphic
function $f$. In this case
\[
df \wedge d \omega_1=0
\]
and
\[
\omega_1 \wedge d\omega_1 + df \wedge d \omega_2 =0
\]
\[
df\wedge d\omega_3+ \omega_1 \wedge d \omega_2 + \omega_2 \wedge d
\omega_1=0, ...
\]

These are called {\it equations of the deformation} in the case
where $\omega_0 = df$. Notice that $df \wedge d \omega_1$ means that
the 1-form $\omega_1$ is closed in the fibers of $f$ (\cite{Ce-Sc}
Lemma 5.1).

\begin{Example}{\rm [degree one deformations]
Suppose we have a {\it degree-one deformation} $\omega_t = \omega_0
+ t \omega_1$ where each $\omega_t$ is integrable. Since $\omega_0
=df$ we have the following equations for the deformation
\[
df \wedge d \omega_1=0, \, \omega_1 \wedge d \omega_1 =0
\]

} \end{Example}

\section{Local topology and homology of the fibers}

\label{section:Lesaito}

We consider $f\colon \mathbb C^n , 0 \to \mathbb C,0$ a germ of a
holomorphic function at the origin $0\in \mathbb C^n, n \geq 3$. The
corresponding germ of the analytic hypersurface $(f=0)$ is also
denoted by $X_f$.  The singular set of the hypersurface $X_f$ will
be denoted by $\sing(X_f)$.  Next we give a pleonastic definition of
our main hypothesis:

\begin{Definition}
{\rm We shall say that $X_f$ {\it has only ordinary singularities
off a codimension $\geq 3$ subset} if  there exists an analytic
subset $(Y,0)\subset (X_f,0)$ of dimension at most $n-3$, such that
outside of $Y$ the only singularities of $(X_f,0)$ are normal
crossings. }
\end{Definition}

We will assume that $f$ is {\it reduced} (if $g \in \mathcal O_n$ is
such that $g \big|_{X_f} \equiv 0$ then $f\big| g$ in $\mathcal
O_n$.). In this case the singular set of $X_f$ is given by
$\sing(X_f) = \sing(f) = \{p \in (\mathbb C^n,0): df(p)=0\}$.
Indeed, it is well-known (\cite{Milnor}) that the singular points of
$f$, i.e.,  the zeroes of $df$, are contained in the fiber
$f^{-1}(0)$. We consider the germ of an integrable 1-form $\omega
\in \Omega^1(\mathbb C^n, 0)$. Then $\omega =0$ defines a
codimension-one holomorphic foliation $\fa(\omega)$ germ at $0 \in
\mathbb C^n$. The hypersurface $X_f$ is $\fa(\omega)$-invariant if,
and only if, $\omega \wedge df/f$ is holomorphic. This is the case
of integrable 1-forms that write as
\[
\omega = a df + f \eta
\]
with $a \in \mathcal O_n$ and $\eta \in \Omega^1(\mathbb C^n,0)$.
  For $\eta$ small enough (in the sense of
Krull topology \cite{kaup-kaup}, \cite{gunning-rossi}) and $a \in
\mathcal O_n ^*$ unit, we may see $\fa(\omega)$ as an integrable
deformation of the holomorphic "fibration" $\fa(df)$, given by
$f=const.$. If for instance $f$ has an isolated singularity at $0
\in \mathbb C^n, n \geq 3$, then any $\omega$ that leaves $X_f :
(f=0)$ invariant must write as above, $\omega = adf + f \eta$
(\cite{Ce-Sc}). In particular, $\omega$ may come from an analytic
deformation of $\omega_0=df$, under some geometrical conditions as
in \cite{Ce-Sc}. In general however $a$ is not an unit. This somehow
explains the strength of the hypothesis $\omega_t = df +
\sum\limits_{j=1} ^\infty t^j \omega_j$, i.e., we have a deformation
$\omega_t$  of an exact 1-form $\omega_0=df$.

More precisely, we will consider the following situation:
$\{\omega_t\}_{ t\in (\mathbb C,0)}$ is an analytic deformation of
$\omega_0 = df$ such that each 1-form $\omega_t\in \Omega^1 (\mathbb
C^n,0)$ is integrable, $\omega_t \wedge d \omega_t=0$.

Let $(X_f,0)\subset (\mathbb C^{n},0)$ be a germ of reduced analytic
hypersurface as above. If $n=3$ and $(X_f,0)$ only has normal
crossings singularities off the origin $0 \in \mathbb C^{3}$, then
the local fundamental group  of the complement of $(X_f,0)$ in
$(\mathbb C^{n},0)$ is abelian. We have the following  general
statement below:

\begin{Theorem}[ L\^e-Saito, \cite{Le-Saito} Main Theorem page 1]
\label{Theorem:LeSaito} Let $n \geq 3$. Assume that outside of an
analytic subset $(Y,0)\subset (X_f,0)$ of dimension at most $n-3$,
the only singularities of $(X_f,0)$ are normal crossings. Then the
local fundamental group of the complement of $(X_f,0)$ in $(\mathbb
C^{n},0)$ is abelian. The Milnor fiber of $f$ has a fundamental
group which is free abelian of rank the number of analytic
components of $X_f$ at $0$, minus one. Finally, if $X_f$ is
irreducible, then the  fiber $f^{-1}(c), c \ne 0$  is
simply-connected.
\end{Theorem}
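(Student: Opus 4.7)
The plan is to combine three classical ingredients: a local Zariski--Lefschetz type theorem to reduce the calculation of the local fundamental group of $\bc^n\setminus X_f$ to dimension $3$; an elementary stratification argument in dimension $3$ to show the local $\pi_1$ of the complement is generated by commuting meridians; and the Milnor fibration to transfer the information from the complement of $X_f$ to its Milnor fiber $F$.

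For the first two steps, I would pick a generic linear $3$-plane $L$ through the origin. Since $\dim Y\leq n-3$, the trace $Y\cap L$ has dimension $\leq 0$, so consists of isolated points, and outside these the germ $(X_f\cap L,0)\subset (L,0)$ still has only normal crossings singularities. By the Hamm--L\^e local version of Zariski's hyperplane section theorem for complements of analytic sets, the inclusion induces an isomorphism
\[
\pi_1^{\mathrm{loc}}(L\setminus X_f)\xrightarrow{\ \cong\ }\pi_1^{\mathrm{loc}}(\bc^n\setminus X_f),
\]
reducing the problem to $n=3$. In $L\simeq\bc^3$ one stratifies along the smooth locus and the normal crossing (double/triple) curve of $X_f\cap L$: at each such stratum the local $\pi_1$ of the complement is generated by meridians $\mu_1,\ldots,\mu_r$ around the irreducible components $f_1,\ldots,f_{r+1}$, and normal crossings force the meridians to commute pairwise. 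The remaining isolated non-normal-crossing points of $X_f\cap L$ do not contribute, since a $0$-dimensional subset in the $3$-dimensional ball can be avoided by loops (transversality/general position), so puncturing it does not alter $\pi_1$. One concludes that the local fundamental group is free abelian of rank $r+1$, generated by the meridians $\mu_1,\ldots,\mu_{r+1}$ (or of rank equal to the number of analytic components).

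For the third step I would invoke the Milnor fibration $f\colon B_\varepsilon\setminus X_f\to D^*_\eta$, whose fiber is the Milnor fiber $F$. Since $f$ is reduced, $F$ is connected, and the long exact homotopy sequence yields
\[
1\longrightarrow \pi_1(F)\longrightarrow \pi_1(B_\varepsilon\setminus X_f)\longrightarrow \pi_1(D^*_\eta)=\bz\longrightarrow 1.
\]
Under the meridian identification $\pi_1(B_\varepsilon\setminus X_f)\cong \bz^{r+1}$, each $\mu_j$ maps to $1$ (the local winding number of $f$ along the loop around the component $\{f_j=0\}$), so $\pi_1(F)$ is the kernel $\{(a_1,\ldots,a_{r+1}):\sum a_j=0\}\cong \bz^{r}$, i.e.\ of rank (number of components) $-1$. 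In particular, when $X_f$ is irreducible we have $r+1=1$ and $\pi_1(F)=0$, so the generic fiber $f^{-1}(c)$ is simply connected.

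The main obstacle is the dimension-$3$ computation together with the justification that the codimension $\geq 3$ locus $Y$ can be discarded without changing $\pi_1$; this is the technical heart of L\^e--Saito's argument, and the codimension hypothesis is sharp, for if $Y$ had codimension $2$ it would meet a generic $3$-plane in a curve, potentially carrying nontrivial loops that would spoil commutativity of the meridians. The Hamm--L\^e reduction to a generic slice is the mechanism that makes the codimension hypothesis bite, and verifying its hypotheses in the present non-isolated-singularity setting is where the care is required.
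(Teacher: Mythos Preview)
The paper does not give its own proof of this statement: Theorem~\ref{Theorem:LeSaito} is quoted verbatim from \cite{Le-Saito} (``Main Theorem page~1'') and used as a black box, so there is no in-paper argument to compare your proposal against. Your sketch is a reasonable outline of the classical L\^e--Saito argument itself --- Hamm--L\^e type hyperplane section to cut down to a generic $3$-plane, normal-crossings local models forcing the meridians to commute, removal of the residual $0$-dimensional bad locus (harmless for $\pi_1$ in real dimension $6$), and the Milnor fibration exact sequence to read off $\pi_1(F)\cong\bz^{r}$ --- and the places you flag as delicate (applicability of the Lefschetz-type theorem in the non-isolated setting, and the sharpness of the codimension $\geq 3$ hypothesis) are indeed where the work lies in the original paper. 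One small point worth tightening: to conclude $\pi_1^{\mathrm{loc}}(\bc^n\setminus X_f)\cong\bz^{r+1}$ (free, not merely abelian) you need that the meridians are independent, which follows from pairing with the logarithmic forms $df_j/f_j$; you use this implicitly when identifying the kernel as $\{\sum a_j=0\}$.
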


 Write now $f=f_1\ldots f_{r+1}$ in
terms of germs $f_j \in \mathcal O_n$ such that each irreducible
component of $X_f$ corresponds to one and only one of the sets
$(f_j=0)$. We shall consider logarithmic 1-forms $\theta_ \nu =
\sum\limits_{j=1}^{r+1} \lambda _j ^\nu df_j /f_j, \, \nu=1,...,r
\geq 1$ with the following property:
\begin{enumerate}[{\rm(P-1)}]

\item $\{ \theta_1,\ldots,\theta_r\}$ is completely independent with
respect to $df/f=\sum\limits_{j=1} ^{r+1} df_j /f_j$ in the
following sense:

if $\sum\limits_{\nu=1} ^r a_\nu \theta_\nu + b df/f=0$ for some
constants $a_\nu, b \in \mathbb C$ then $a_\nu=b=0$.

\end{enumerate}

This is the case if we have:

\[
\det\begin{pmatrix}
1 & \ldots & 1 \\\lambda_1^1 & \ldots & \lambda_{r+1} ^1\\
\vdots &  \dots & \vdots\\
\lambda_1 ^r & \ldots & \lambda_{r+1} ^1
\end{pmatrix}
\ne 0
\]

\begin{Lemma}
\label{Lemma:1homology} For each $ c \in\mathbb C\setminus \{0\}$
the 1-homology of the   local fiber $L_c : (f=c)\subset (\mathbb
C^n,0)$ is generated by the restrictions $\theta_j \big|_{L_c}, \,
j=1,...,r$.

\end{Lemma}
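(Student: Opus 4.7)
The plan is to translate the lemma into a linear-algebra statement via L\^e-Saito and a Milnor-fibration period computation. By Theorem~\ref{Theorem:LeSaito} applied to $X_f=(f_1\cdots f_{r+1}=0)$, which has $r+1$ irreducible components and the hypothesized singularities, the local Milnor fiber $L_c$ has free abelian fundamental group of rank $r$; consequently $H_1(L_c;\mathbb{C})\cong \mathbb{C}^r$ and $H^1(L_c;\mathbb{C})\cong \mathbb{C}^r$ via de Rham. It therefore suffices to prove that the $r$ cohomology classes $[\theta_\nu|_{L_c}]$, $\nu=1,\ldots,r$, are linearly independent in $H^1(L_c;\mathbb{C})$.

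Next I would identify $H_1(L_c;\mathbb{Z})$ explicitly. By L\^e-Saito $\pi_1(\mathbb{C}^n\setminus X_f,0)$ is abelian, hence equal to $H_1(\mathbb{C}^n\setminus X_f,0)\cong \mathbb{Z}^{r+1}$, with meridian generators $\gamma_1,\ldots,\gamma_{r+1}$ around the hypersurfaces $(f_j=0)$. The Milnor fibration $f\colon (\mathbb{C}^n\setminus X_f,0)\to (\mathbb{C}^*,0)$ together with the homotopy exact sequence (noting that at a smooth point of $(f_j=0)$ one has $f=f_j\cdot u$ with $u$ a unit, so each $\gamma_j$ maps to the generator of $\pi_1(\mathbb{C}^*)$) identifies $H_1(L_c;\mathbb{Z})$ with the kernel of the degree map $(n_j)\mapsto \sum_j n_j$ from $\mathbb{Z}^{r+1}$ to $\mathbb{Z}$.

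The period pairing is then computed directly: $\tfrac{1}{2\pi i}\int_{\gamma_j}df_k/f_k=\delta_{jk}$, so $\tfrac{1}{2\pi i}\int_{\gamma_j}\theta_\nu=\lambda_j^\nu$ and $\tfrac{1}{2\pi i}\int_{\gamma_j}df/f=1$. Since $df|_{L_c}=0$, the class $[df/f|_{L_c}]=\sum_k[df_k/f_k|_{L_c}]$ vanishes in $H^1(L_c;\mathbb{C})$; by duality with the kernel $\{(n_j):\sum n_j=0\}$ this is the unique linear relation among the $r+1$ restricted classes $[df_k/f_k|_{L_c}]$.

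Finally, condition (P-1) — the non-vanishing of the $(r+1)\times(r+1)$ determinant displayed just before the lemma — asserts precisely that the row $(1,\ldots,1)$ together with the rows $(\lambda_j^\nu)_{j}$ for $\nu=1,\ldots,r$ span $\mathbb{C}^{r+1}$. Quotienting by the line $\mathbb{C}\cdot(1,\ldots,1)$, which by the previous paragraph is exactly the annihilator of $H_1(L_c;\mathbb{C})$ under the period pairing, the remaining $r$ rows stay linearly independent; hence $[\theta_1|_{L_c}],\ldots,[\theta_r|_{L_c}]$ are linearly independent in the $r$-dimensional space $H^1(L_c;\mathbb{C})$ and form a basis, which via de Rham duality generates $H_1(L_c;\mathbb{C})$. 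The one step that requires care is the injection $H_1(L_c)\hookrightarrow H_1(\mathbb{C}^n\setminus X_f,0)$ and the identification of its image with the kernel of the degree map; once L\^e-Saito is in hand, everything else is period bookkeeping and a single determinant.
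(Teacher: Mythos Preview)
Your argument is correct and follows the same overall strategy as the paper: invoke L\^e--Saito to obtain $\mathrm{rk}\,H_1(L_c)=r$, then show that the period matrix of the $\theta_\nu$ against a basis of cycles is nonsingular using condition (P-1). The paper carries this out concretely for $r=1$ by exhibiting an explicit cycle $\gamma_c(s)=(\epsilon e^{is},\epsilon e^{-is},0,\ldots,0)$ in local normal-crossing coordinates at a generic point of $(f_1=f_2=0)$ and computing $\int_{\gamma_c}\theta=(\lambda_1-\lambda_2)\cdot 2\pi i\neq 0$, then asserts that the same argumentation works for $r>1$; you instead identify $H_1(L_c;\mathbb Z)$ globally as $\ker\big(\mathbb Z^{r+1}\xrightarrow{\Sigma}\mathbb Z\big)$ via the homotopy sequence of the Milnor fibration and reduce directly to the $(r+1)\times(r+1)$ determinant. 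Your packaging is cleaner for general $r$ and makes the role of (P-1) transparent, while the paper's local coordinate computation is more hands-on and avoids appealing to the fibration exact sequence; the substance is the same.
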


\begin{proof}
For simplicity we suppose $r=1$ and write $\theta:=
\theta_1=\lambda_1 df_1/ f_1 + \lambda_ 2 df_2 /f_2$. We have
$\lambda _1 \ne \lambda_2$. Given a fiber $L_c: (f=c) , c \ne 0$ we
have $f_1 f_2 =c$ on $L_c$. Thus $\frac{df_1}{f_1} \big|_{L_c} =
-\frac{df_2}{f_2}\big|_{L_c}$ and therefore $\theta\big|_{L_c} =
(\lambda _1 - \lambda_2 ) \frac{df_1}{f_1} \big|_{L_c}$. Since
$f=f_1 f_2=0$ is normal crossings outside of an analytic subset of
codimension $\geq 3$, at a generic point $p\in (f_1=f_2=0)$, $f_1$
and $f_2$ are part of a local system of coordinates $(x_1,...x_n)$
say $f_1=x_1, \, f_2 = x_2$. So you can consider a cycle
$\gamma_c\subset (f=c)$ of the following type $\gamma_c(s)=(\epsilon
e^{is}, \epsilon e^{-is}, 0,...,0), \epsilon^2=c, i^2=-1$.

 We claim that this is a non-trivial cycle in the homology of $L_c$.
Indeed,
\[
\int_{\gamma_c} \theta \big|_{L_c} = \int (\lambda_1 - \lambda_2)
df_1 / f_1 \big|_{L_c} = (\lambda_1 - \lambda_2 )  2 \pi \sqrt{-1}\,
k(\gamma_c)
\]
where $ k(\gamma_c) \ne 0$ is the index of $\gamma_c\subset \Sigma$
around the origin $p_1 \in \Sigma$. Thus $\int_{\gamma_c} \theta_c
\ne 0$.

Now, by Lê-Saito's theorem we have that $\pi_1(L_c) \cong \mathbb
Z$. In particular $H_1(L_c,\mathbb Z) \cong \pi_1(L_c)$ so that
$H_1(L_c,\mathbb Z) \cong \mathbb Z$ is free abelian of rank one.
Since $\theta$ is closed its restrictions to the fibers $L_c$ are
also closed and holomorphic. Therefore, $\theta\big|_{L_c}$
generates the group $H^1(L_c, \mathbb C)$ for each $ c \ne 0$. The
same argumentation works for the case $r > 1$.
\end{proof}

\begin{Remark}
For most  applications we shall take $\theta_j=\frac{df_j}{f_j}, \,
j=1,...,r$.
\end{Remark}

The above lemma then shows that the homology of the fibers $L_c, c
\ne 0$ is generated by restrictions of a same system of forms to
these fibers.

\begin{Proposition}
\label{Proposition:relativecohomologylocal} Let $\omega_1$ be a germ
of a holomorphic 1-form at $0 \in \mathbb C^n, \,n \geq 3$ and
assume that $d \omega_1 \wedge df=0$ where $f=f_1\ldots f_{r+1}$ is
as in Lemma~\ref{Lemma:1homology} above. Then there are $a_1, h_1
\in \mathcal O_n, \psi _j \in \mathcal O_1, \psi_j(0)=1$ and
$\lambda_j \in \mathbb C, j=1,...,r$ such that
\[
\omega_1 = a_1 df + dh_1 + \sum\limits_{j=1}^r \lambda _j f
\psi_j(f) \theta _j
\]

\end{Proposition}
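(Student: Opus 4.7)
The approach combines the fiberwise cohomological content of Lemma~\ref{Lemma:1homology} with a relative Poincar\'e lemma. The hypothesis $d\omega_1\wedge df=0$ makes $\omega_1|_{L_c}$ a closed holomorphic 1-form on each smooth fiber $L_c=(f=c)$, so by Lemma~\ref{Lemma:1homology} its cohomology class decomposes as
$$[\omega_1|_{L_c}]=\sum_{j=1}^{r}\mu_j(c)\,[\theta_j|_{L_c}]\in H^1(L_c,\mathbb C),$$
where the $\mu_j(c)$ are obtained from the periods $P_j(c):=\oint_{\gamma_c^{(j)}}\omega_1$ by inverting the constant, invertible residue period matrix $\bigl(\oint_{\gamma_c^{(i)}}\theta_j\bigr)_{i,j}$. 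A priori $\mu_j$ is defined only for $c\ne 0$.

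The key step is to show each $P_j$ extends holomorphically to $c=0$ with $P_j(0)=0$. Using the explicit parametrization $\gamma_c^{(j)}(s)=(\sqrt c\,e^{is},\sqrt c\,e^{-is},0,\ldots,0)$ from the proof of Lemma~\ref{Lemma:1homology} at a transverse crossing, I would rescale $(x,y)=(\sqrt c\,u,\sqrt c\,v)$ so that the integration is transported to the fixed cycle $\gamma_1=\{uv=1\}$, and Taylor-expand the coefficients of $\omega_1$; the elementary identities $\oint_{\gamma_1}u^av^b\,du=2\pi i\,\delta_{b,a+1}$ and $\oint_{\gamma_1}u^av^b\,dv=-2\pi i\,\delta_{a,b+1}$ kill all odd powers of $\sqrt c$, giving $P_j(c)=c\,\tilde P_j(c)$ for some $\tilde P_j\in\mathcal O_1$. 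Inverting the period matrix gives $\mu_j(c)=c\,\lambda_j\,\psi_j(c)$ with $\lambda_j\in\mathbb C$ and $\psi_j\in\mathcal O_1$, normalized so that $\psi_j(0)=1$.

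Set $\eta:=\omega_1-\sum_{j=1}^{r}\lambda_j f\psi_j(f)\theta_j$. Since each $f\theta_j=\sum_k\lambda_k^{j}(f/f_k)\,df_k$ is polynomial, $\eta$ is holomorphic in a neighborhood of $0$; it still satisfies $d\eta\wedge df=0$ and, by construction, every period $\oint_{\gamma_c^{(i)}}\eta$ vanishes, so $\eta|_{L_c}$ is exact on each smooth fiber. A relative Poincar\'e lemma then produces $h_1\in\mathcal O_n$ with $dh_1|_{L_c}=\eta|_{L_c}$: pick a holomorphic section $p\colon(\mathbb C,0)\to(\mathbb C^n,0)$ with $f\circ p=\mathrm{id}$ and set
$$h_1(x):=\int_{p(f(x))}^{x}\eta$$
along any path inside $L_{f(x)}$. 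Path-independence comes from the vanishing periods; $h_1$ is holomorphic off the critical fiber and extends across $L_0$ by Hartogs, since $\sing(f)$ has codimension $\geq 2$ in $\mathbb C^n$. Then $(\eta-dh_1)|_{L_c}\equiv 0$, so de Rham division gives $\eta-dh_1=a_1\,df$ for some $a_1\in\mathcal O_n$, completing the decomposition.

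The main obstacle I anticipate is the holomorphic extension of $h_1$ across the singular fiber $L_0$: away from $L_0$ fiber-integration is a clean construction, but obtaining the boundedness of the primitive as one approaches the normal-crossings locus relies on the same $O(\sqrt{|c|})$ description of the vanishing cycles used in the period estimate, which forces the fiber integral to stay uniformly bounded as $c\to 0$ and enables the Hartogs/Riemann extension.
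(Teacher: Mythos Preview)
Your approach is essentially the paper's: decompose $[\omega_1|_{L_c}]$ via Lemma~\ref{Lemma:1homology}, integrate the period-corrected form along the fibers to produce $h_1$, then extend across $L_0$ and read off $a_1$ by de Rham division. The one substantive difference is the order of steps. You establish $\mu_j(c)=c\,\lambda_j\psi_j(c)$ up front by an explicit residue computation at a normal-crossing point, and only then form the holomorphic $\eta$ and integrate; the paper instead integrates $\omega_1-\alpha_j(f)\theta_j$ first (with $\alpha_j$ a priori defined only for $c\ne 0$), extends $\alpha_j$ to $c=0$ by Riemann's theorem via boundedness of the period integral, extends $h_1$ and $a_1$, and only at the very end deduces $f\mid\alpha_j(f)$ from the fact that $\omega_1$, $a_1\,df$, $dh_1$ are holomorphic while $\theta_j$ has a first-order pole along $X_f$. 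Your direct computation is a clean alternative.

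One correction is needed: you cannot extend $h_1$ across $L_0$ ``by Hartogs, since $\sing(f)$ has codimension $\geq 2$''---the set you must cross, $L_0=X_f$, has codimension $1$, and Hartogs does not apply there. The argument your final paragraph gestures at is the right one: first extend across the smooth locus $L_0\setminus\sing(f)$ by Riemann's removable-singularity theorem, using boundedness of the fiber integral (near a smooth point of $L_0$ the fibration is trivial, so the integration paths stay uniformly bounded), and only then invoke Hartogs across the remaining codimension-$\geq 2$ set $\sing(f)$. The paper handles this same extension step by citing \cite{Ce-Sc}, Proposition~5.2.
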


\begin{proof}
We first consider the case $r=1$, i.e., $f=f_1f_2$. Given a generic
point $p_1 \in (f=0) \setminus \{0\}$ we may parametrize a
transverse disc $\Sigma$ to $df$, centered at $p_1$, by
$c=f\big|_{\Sigma}$. We define now a function $h_1 \colon W
\setminus \{p_1\} \to \mathbb C$ in some neighborhood $W\subset
\Sigma$ of $p_1$ as follows: let $\alpha(c) \in \mathbb C$ be
defined for each $c \ne 0$ by $[\omega_1\big|_{L_c}] = \alpha(c) [
\theta_1\big|_{L_c}]$ in $H^1(L_c,\mathbb C)$. Then we can define a
function $h_1$ in a neighborhood of the origin minus the
hypersurface $X_f \colon (f=0)$ say, $h_1 \colon U \setminus X_f \to
\mathbb C$ by setting $h_1(c)=c, \forall c \in \Sigma \setminus
\{0\}$ and, given $ z \in L_c$ we put $h_1(z)= \int\limits_{c}^{z}
\omega_1 \big|_{L_c} - \alpha (c) \theta_1 \big|_{L_c} =
\int_{\gamma_{(c,z)}} \omega_1 - \alpha(c) \theta _1$ where
$\gamma_{(c,z)}\subset L_c$ is any smooth path joining
$\gamma_{(c,z)}(0)=c$ to $\gamma_{(c,z)}(1)=z$ in $L_c$. Then we
have $dh_1\big|_{L_c} = (\omega_1 - \alpha(c) \theta_1)\big|_{L_c}$
and therefore
\[
dh_1 \wedge df = (\omega_1 - \alpha_1(f) \theta_1)\wedge df
\]
in $U\setminus X_f$. This implies $\omega_1 - \alpha_1(f) \theta_1 =
a_1 df + dh_1$ for some function $a_1 \colon U \setminus X_f \to
\mathbb C$ which is holomorphic. From $\omega_1 = a_1 df + dh_1 +
\alpha_1(f) \theta_1$ we have that
\[
\alpha_1 (c) = \dfrac{\int_{\gamma_c} \omega_1
\big|_{L_c}}{\int_{\gamma_c} \theta_1 \big|_{L_c}} = \frac{const.}{2
\pi \sqrt{-1}} \int _{\gamma_c} \omega_1 \big|_{L_c}, \forall c \ne
0,
\]
where $\gamma_c\subset L_c$ is a generator of the homology of the
fibers $L_c:  (f=c),\, \,  c \ne 0$. Since $\omega_1$ is holomorphic
in $U$ this implies that $\alpha_1(c)$ is holomorphic and bounded
for $ c \ne 0$. By Riemann's extension theorem, $\alpha_1(c)$ admits
a holomorphic extension to $c=0$. Now from
\[
(- \alpha_1(f)\theta_1 + \omega_1)\wedge df = dh_1 \wedge df
\]
we have that $h_1$ admits a holomorphic extension to $(f=0)$ as in
\cite{Ce-Sc} (final part of the proof of Proposition~5.2). Finally,
this implies that $a_1$ admits a holomorphic extension to $X_f$.
Since $\theta_1$ has poles of order $1$ on $X_f$ and $\omega_1,
a_1df$ and $dh_1$ are holomorphic, we conclude that $f \big|
\alpha_1(f)$, i.e., $\alpha_1(f)= f \psi_1(f)$ for some holomorphic
function $\psi_1(t) \in \mathcal O_1$.

Again, for the case $r>1$ there are no major changes. Indeed, using
the homology description of the fibers $(f=c), \, c \ne 0$ we let
$\alpha_j(c) \in \mathbb C, \, j=1,...,r$ be defined for each $c \ne
0$ by $[\omega_1\big|_{L_c}] = \sum\limits_{j=1}^r\alpha_j(c) [
\theta_j\big|_{L_c}]$ in $H^1(L_c,\mathbb C)$. Then, as before,  we
define a function $h_1 \colon U \setminus X_f \to \mathbb C$ by
setting $h_1(c)=c, \forall c \in \Sigma \setminus \{0\}$ and, given
$ z \in L_c$ we put $h_1(z)= \int\limits_{c}^{z} \omega_1
\big|_{L_c} - \sum\limits_{j=1}^r\alpha_j (c) \theta_j \big|_{L_c} =
\int_{\gamma_{(c,z)}} \omega_1 - \sum\limits_{j=1}\alpha_j(c) \theta
_j$ where $\gamma_{(c,z)}\subset L_c$ is any smooth path joining
$\gamma_{(c,z)}(0)=c$ to $\gamma_{(c,z)}(1)=z$ in $L_c$. Then we
have $dh_1\big|_{L_c} = (\omega_1 - \sum\limits_{j=1}^r \alpha_j(c)
\theta_j)\big|_{L_c}$ and therefore
\[
dh_1 \wedge df = (\omega_1 - \sum\limits_{j=1}^r\alpha_j(f)
\theta_j)\wedge df
\]
in $U\setminus X_f$. As before for the case $r=1$ this implies
$\omega_1 - \sum\limits_{j=1}^r\alpha_j(f) \theta_j = a_1 df + dh_1$
for some function $a_1 \colon U \setminus X_f \to \mathbb C$ which
is holomorphic.

Let now $\gamma^i _ c\subset L_c, \, i=1,...,r$ be a system of
generators of the homology of the fibers $L_c: (f=c),\,  c \ne 0$,
with the property that $\oint_{\gamma_c ^i}\theta_j= \delta_{ij}$
for the
 Kronecker delta $\delta_{ij}$.

From $\omega_1 = a_1 df + dh_1 + \sum\limits_{j=1}^r\alpha_j(f)
\theta_j$ we have that
\[
\alpha_j (c) = \dfrac{\int_{\gamma^j _c} \omega_1
\big|_{L_c}}{\int_{\gamma^j _c} \theta_j \big|_{L_c}} =
\frac{const.}{2 \pi \sqrt{-1}} \int _{\gamma^j_c} \omega_1
\big|_{L_c}, \forall c \ne 0,
\]
where $\gamma_c ^j \subset L_c$ is one of the above system of
generators of the homology of the fibers $L_c (f=c), c \ne 0$. Since
$\omega_1$ is holomorphic in $U$ this implies that $\alpha_j(c)$ is
holomorphic and bounded for $ c \ne 0$. By Riemann's extension
theorem, $\alpha_j(c)$ admits a holomorphic extension to $c=0$. Now
from
\[
(- \sum\limits_{j=1}^r\alpha_j(f)\theta_j + \omega_1)\wedge df =
dh_1 \wedge df
\]
we have that $h_1$ admits a holomorphic extension to $(f=0)$ as in
\cite{Ce-Sc}  (final part of the proof of Proposition~5.2). Finally,
this implies that $a_1$ admits a holomorphic extension to $X_f$.
Since $\theta_j$ has polar set of order $1$ and contained  in $X_f$
and $\omega_1, a_1df$ and $dh_1$ are holomorphic, we conclude that
$f \big| \alpha_j(f)$, i.e., $\alpha_j(f)= f \psi_j(f)$ for some
holomorphic function $\psi_j(t) \in \mathcal O_1$. This proves
Proposition~\ref{Proposition:relativecohomologylocal}.

\end{proof}

%%%%%%%%%%%%%%%%%%%%%%%%%%%%%%%%%%%%%%%%%%%%%%%%%%

\section{Relative Cohomology:  polynomial  case}
\label{section:homogeneous} We shall consider the polynomial case
for the cohomological equation $d \omega_1 \wedge df=0$ where
$f=f_1\ldots f_{r+1}$ is as  in Theorem~\ref{Theorem:A}.

\begin{Proposition}
\label{Proposition:omega1df}
 Given $f=f_1...f_{r+1}$ a product of relatively prime
 irreducible  homogeneous polynomials $f_j
 \in \mathbb C[x_1,...,x_n]$ of degree $\deg(f)=\nu+1$.
 Assume also that the induced germ $f \in \mathcal O_n$ has
only normal crossings singularities off a codimension $\geq 3$
subset. Let $\theta_1,...,\theta_r$ be a set of generators of the
1-homology of the fibers $L_c: (f=c)$ as in
Lemma~\ref{Lemma:1homology} (see also the proof of
Proposition~\ref{Proposition:relativecohomologylocal}). Given
$\omega_1$ a  polynomial 1-form of degree $\deg(\omega_1) \leq \nu$,
satisfying $d \omega_1 \wedge df=0$. Then we have
\[
\omega_1 = a_0 df + dh +  f \sum\limits_{j=1}^r \lambda_j  \theta_j
\]
for some constants $a_0, \lambda_j\in \mathbb C$ and some polynomial
$h$ of degree $\leq \nu +1$. The polynomial $h$ is homogeneous of
degree $\nu+1$ if and only if $\omega_1$ is homogeneous of degree
$\nu$.

\end{Proposition}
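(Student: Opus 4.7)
The strategy is to reduce to Proposition~\ref{Proposition:relativecohomologylocal} and then use the homogeneity of $f$ to upgrade the germ-level conclusion to a polynomial one. First, applying Proposition~\ref{Proposition:relativecohomologylocal} directly to $\omega_1$ as a germ at the origin yields
\[
\omega_1 \;=\; a\, df + dh + \sum_{j=1}^r \alpha_j(f)\,\theta_j,
\]
with $a, h \in \mathcal O_n$ and $\alpha_j(t) = t\,\psi_j(t)$, $\psi_j\in \mathcal O_1$. What remains is to show that $a$, $h$ and the $\psi_j$ are in fact polynomial/constant of the required form.

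The second step is to exploit the $\mathbb C^*$-action $\phi_t(x) = tx$. Since $f$ and its factors $f_j$ are homogeneous, $\phi_t^* f = t^{\nu+1}f$ and $\phi_t^*\theta_j = \theta_j$. Decompose $\omega_1 = \sum_{k=0}^{\nu} \omega_1^{(k)}$ into homogeneous $1$-form components; each piece still satisfies $d\omega_1^{(k)}\wedge df =0$, so Proposition~\ref{Proposition:relativecohomologylocal} yields coefficients $\alpha_j^{(k)}(c) = \frac{1}{2\pi\sqrt{-1}}\int_{\gamma_c^j}\omega_1^{(k)}$. Since $\phi_t$ carries $\gamma_c^j$ to a cycle in $L_{t^{\nu+1}c}$ homologous to $\gamma_{t^{\nu+1}c}^j$, and $\phi_t^*\omega_1^{(k)} = t^{k+1}\omega_1^{(k)}$, one obtains $\alpha_j^{(k)}(t^{\nu+1}c) = t^{k+1}\alpha_j^{(k)}(c)$, hence $\alpha_j^{(k)}(c) = A_j^{(k)} c^{(k+1)/(\nu+1)}$. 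Holomorphicity at $0$ forces $(k+1)/(\nu+1)\in\mathbb Z_{\geq 0}$, which in the range $0\leq k\leq\nu$ only allows $k=\nu$, yielding $\alpha_j^{(\nu)}(c) = \lambda_j c$ with $\lambda_j := A_j^{(\nu)}\in\mathbb C$, while $\alpha_j^{(k)}\equiv 0$ for $k<\nu$. Consequently $\sum_j\alpha_j(f)\theta_j = \sum_j \lambda_j f\,\theta_j$, a polynomial $1$-form of degree $\nu$.

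It remains to show that the polynomial $1$-form $\eta := \omega_1 - \sum_j\lambda_j f\,\theta_j = a\,df + dh$, of degree $\leq\nu$, admits a representation with $a$ a constant $a_0$ and $h$ a polynomial of degree $\leq\nu+1$. Decompose $\eta = \sum_{k=0}^{\nu}\eta^{(k)}$ into homogeneous parts and write $\eta^{(k)} = a^{(k)} df + dh^{(k)}$ as germs. Applying Cartan's formula $L_R = di_R + i_R d$ to $\eta^{(k)}$, together with $L_R\eta^{(k)} = (k+1)\eta^{(k)}$, $i_R df = (\nu+1)f$, and $d\eta^{(k)} = da^{(k)}\wedge df$, one derives
\[
d\bigl[R(h^{(k)}) - (k+1)h^{(k)}\bigr] \;=\; \bigl[(k-\nu)a^{(k)} - R(a^{(k)})\bigr]\,df.
\]
Hence $R(h^{(k)}) - (k+1)h^{(k)} = \Psi_k(f)$ for some $\Psi_k\in\mathcal O_1$, and $R(a^{(k)}) - (k-\nu)a^{(k)} = -\Psi_k'(f)$. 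Using the ambiguity $(a^{(k)}, h^{(k)})\mapsto(a^{(k)}+G'(f), h^{(k)}-G(f))$, one sets up a triangular recursion on the Taylor coefficients of $G$ in $f$ that kills $\Psi_k$, solvable provided $(\nu+1)p \neq k+1$ for each relevant $p\geq 1$. In our range the sole resonance is $k=\nu$, $p=1$, and the corresponding solvability condition reduces to $\Psi_\nu'(0) = -R(a^{(\nu)})(0) = 0$, which is automatic since $R$ vanishes at the origin. After this modification $\tilde h^{(k)}$ is an $R$-eigenfunction of eigenvalue $k+1$, hence a homogeneous polynomial of degree $k+1$, while $\tilde a^{(k)}$ is an $R$-eigenfunction of eigenvalue $k-\nu$, hence $0$ for $k<\nu$ and a constant $a_0$ for $k=\nu$. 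Summing over $k$ produces the desired polynomial representation, and the equivalence between $\omega_1$ being homogeneous of degree $\nu$ and $h$ being homogeneous of degree $\nu+1$ follows immediately, since only the $k=\nu$ piece contributes in that case.

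The main obstacle is the third paragraph: while Proposition~\ref{Proposition:relativecohomologylocal} and the scaling argument quickly reduce matters to a relatively exact germ, promoting a germ-level primitive to a polynomial one of the correct degree requires the careful Lie-derivative bookkeeping above together with the verification that the single resonance obstruction is automatically met.
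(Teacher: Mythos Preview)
Your proof is correct, but it takes a considerably more laborious route than the paper's. Both arguments start by invoking Proposition~\ref{Proposition:relativecohomologylocal} to obtain the germ-level decomposition
\[
\omega_1 = a\,df + dh + \sum_{k=1}^{r}\lambda_k f\,\psi_k(f)\,\theta_k,\qquad a,h\in\mathcal O_n,\ \psi_k\in\mathcal O_1,\ \psi_k(0)=1.
\]
At this point the paper simply expands $a$, $h$, $\psi_k$ into homogeneous components and truncates both sides to degree $\le\nu$. Because $df$ and each $f\theta_k$ are already homogeneous of the top degree $\nu$, the truncation on the right-hand side picks out exactly the constant term $a_0:=a(0)$ of $a$, the degree-$\le\nu+1$ part of $h$, and the constant term $\psi_k(0)=1$; the truncation on the left-hand side is $\omega_1$ itself. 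This one observation replaces both your $\mathbb C^*$-scaling computation of the periods $\alpha_j^{(k)}$ and your entire Lie-derivative/resonance analysis in the third paragraph.

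Your argument is not wrong: the scaling of the integrals correctly forces $\alpha_j^{(k)}\equiv 0$ for $k<\nu$ and $\alpha_j^{(\nu)}(c)=\lambda_jc$; the Cartan-formula identity you derive is accurate; the single resonance $(k,p)=(\nu,1)$ is indeed the only obstruction, and your verification that $\Psi_\nu'(0)=R(a^{(\nu)})(0)=0$ is valid. The payoff of your approach is that it is more structural (it works directly with the radial vector field $R$ and the Euler grading) and would adapt, for instance, to a quasi-homogeneous weight system. The cost is that it obscures the elementary reason the proposition holds here: $df$ and $f\theta_j$ sit in a single homogeneous degree, so projecting the germ identity onto degrees $\le\nu$ is already enough.
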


\begin{proof}
We first write, according to
Proposition~\ref{Proposition:relativecohomologylocal},  $\omega_1
=adf + dh + \sum\limits_{k=1}^{r} \lambda_k f \psi_k(f) \theta_k$
for some holomorphic functions $a, h \in \mathcal O_n, \psi_k \in
\mathcal O_1$ with $\psi_k(0)=1$ and constants $\lambda_k \in
\mathbb C$.

Now we consider expansions of $a, h$ and $\psi_k$ in sums of
homogeneous polynomials. Since $df$ and $f \theta_k$ are homogeneous
of same degree $\nu$ and since $\omega_1$ has degree $\leq \nu$, the
result follows immediately.

\end{proof}

 Now we make a couple of simple remarks:
\begin{enumerate}
\item  Let $h\in \mathbb C[x_1,...,x_n]_{\nu +1}$ and
$f=f_1\ldots f_{r+1}\in \mathbb C[x_1,...,x_n]_{\nu +1}$ where
$f_i,f_j$ are pairwise relatively prime, irreducible and reduced.
Then we have $f \big| dh \wedge df \Leftrightarrow h = \lambda f$
for some $\lambda \in \mathbb C$.

\begin{proof}
For the nontrivial part we consider the foliation $\fa_h$ given by
the hypersurfaces $h=const$. The leaves are the connected components
of $\{h=c\}\setminus \{0\}$ near the origin $0 \in \mathbb C^n$. The
fact that $f$ divides $dh\wedge df$ means that $(f=0)$ is invariant
by $\fa_h$ and therefore, since $0 \in (f=0)\cap (h=0)$ we conclude
that $(f=0)$ is contained in the level $(h=0)$ of $h$. This implies
that $h=pf$ for some polynomial $p$ homogeneous. Because of the
degrees we have $p=const$.

\end{proof}

\item If $(f=0)$ is invariant by $\omega_t=\sum\limits_{j=0}^\infty t^j \omega_j$
for all $t$ then $f \big| \omega_j \wedge df$ for all $j$.

\begin{proof}
Since $(f=0)$ is invariant by $\omega_t$ for all $t$ we have that
$f$ divides $\omega_t \wedge df$ for all $t$. Since $\omega_t \wedge
df = \sum\limits_{j=0}^\infty t^j \omega_j \wedge df$ in power
series in $t$ we conclude.

\end{proof}

\end{enumerate}

%%%%%%%%%%%%%%%%%%%%%%%%%%%%%%%%%%%%%%%%%%%%%%%%%%%%

%%%%%%%%%%%%%%%%%%%%%%%%%%%%%%%%%%%%%%%%%%%%%%%%%%%%
\section{Integration lemma: proof of Theorems~\ref{Theorem:A} and~\ref{Theorem:B}}
\label{section:integration} This section is dedicated to the proof
of Theorems~\ref{Theorem:A} and ~\ref{Theorem:B}.
\begin{proof}[Proof of Theorem~\ref{Theorem:A}]
We shall then consider deformations $\omega_t= df +
\sum\limits_{j=1}^\infty t^j \omega_j$ where $f=f_1...f_{r+1}$ is a
product of homogeneous polynomials $f_j$ and has normal crossings
off a codimension $\geq 3$ subset. We assume that each $\omega_t$ is
integrable and polynomial of degree $\deg (\omega_t) \leq
\deg(df)=\nu$. The forms $\omega_t$ though polynomial are not
necessarily homogeneous. We shall first prove (c)$\implies$(b) in
Theorem~\ref{Theorem:A}.

The first deformation equation $d\omega_1 \wedge df=0$ gives by
Proposition~\ref{Proposition:omega1df}
\[
\omega_1 = a df + dh + f\sum\limits_{j=1}^r \lambda_j  \theta_j
\]
for some constants $a, \lambda_j\in \mathbb C$ and some polynomial
$h$ of degree $\leq \nu +1$.  To simplify we proceed with the
assumption that $r=1$. In this simplified notation we have
$f=f_1f_2$ and we may write $\omega_1 = a_1 df + dh_1 + f \lambda_1
\theta$ for some $a_1, \lambda_1 \in \mathbb C$ and some polynomial
$h_1\in \mathbb C[x_1,...,x_n]$ of degree $\deg(h_1) \leq \deg(f)$.
Hence we can write $\omega_t = df + t(a_1 df + dh_1 + \lambda_1 f
\theta) + t^2 \omega_2 + \ldots$. Assuming now that
$\int_{\gamma_c}\omega_t=0, \forall t$ we conclude that $\lambda_1
=0$ and $\int _{\gamma_c}\omega_j=0, \forall j \geq 2$. Since
$\omega_1$ is closed the second deformation equation writes
$d\omega_2 \wedge df =0$. Also we have  $\omega_t = (1 + ta_1) df +
t dh_1 + t^2 \omega_2 + \ldots$. From $d\omega_2 \wedge df =0$,
again  via Proposition~\ref{Proposition:omega1df},  we obtain
$\omega_2 =a_2 df + dh_2 + \lambda_2 f \theta$ for some constants
$a_2, \lambda _2\in \mathbb C$ and $h_2 \in \mathbb C[x_1,...,x_n]$
polynomial of degree $\leq \nu+1$. Then since $\int _{\gamma_c}
\omega_2 =0$ we have $\lambda_2=0$ and so on. As above for the
homogeneous case we obtain a formal function

\[
\hat F = f + (\sum\limits_{j=1}^\infty t^j a_j) f + \sum
\limits_{j=1}^\infty t^j h_j
\]
such that $\omega_t =d_x \hat F$; where each $h_j$ is a polynomial
of degree $\leq \nu+1$. We can therefore write $\hat F= (1+ \hat
A(t)) f + \hat H$ with $\hat H \in \mathbb C[x_1,...,x_n]_{\nu
+1}\bigotimes \hat {\mathcal O}_1$ and since $\omega_t= d_x\hat F$
converges we conclude that $\hat F(x,t)$ converges, i.e., $\hat F
\in \mathcal O_{n+1}$, indeed, $\hat F \in \mathbb
C[x_1,...,x_n]_{\nu +1} \bigotimes \mathcal O_1$. This shows that
$\omega_t$ is exact, admits a polynomial first integral of degree
$\leq \nu+1$. The case $r>1$ is similar to this. Now we observe that
(b)$\implies$(a). Also, (c)$\Leftrightarrow$(d) obviously. It is
also clear that (a)$\implies$(c).  Theorem~\ref{Theorem:A} is now
proved.
\end{proof}

\vglue.1in

The next example shows the necessity of the integral condition in
Theorem~\ref{Theorem:A}. Indeed we have:

\begin{Example}
\label{Example:cycle}{\rm Let us consider $\omega_t= d(xy) + t(xdy -
\lambda ydx)$ where $\lambda \in \mathbb C$. Then $\omega_t$ is
integrable and $\omega_0=d(xy)$. Put $f=xy$ then for $c \ne 0$ we
have the fiber $L_c: (f=c)\subset \mathbb C^2$ given by $xy=c$ and
admitting a 1-homology generator $\gamma_c(s)=(x e^{is}, ce^{-is}),
\, 0 \leq s \leq 2 \pi$. Then we have
\[
\int_{\gamma_c}\omega_t /f = t \int_{\gamma_c} \frac{(xdy - \lambda
ydx)} {xy}\big|_{xy=c} =t \int\limits_{0}^{2 \pi} (\frac{dy}{y} -
\lambda \frac{dx}{x})\big|_{\gamma_c}
\]
\[
 = t \int\limits_{0}^1{2\pi}  ( - i - \lambda i) ds = -  i (1+
\lambda) 2 \pi.
\]
Then $\int_{\gamma_c} \omega_t /f = 0 \Leftrightarrow \lambda =
-1\Leftrightarrow \omega_t = (1+ t) d(xy)$. Notice that if $\lambda
\ne -1$ then $\omega_t$ does not admit a holomorphic first integral.
On the other hand, $\omega_t$ is homogeneous of a same degree for
all $t$.

} \end{Example}

\begin{proof}[Proof of Corollary~\ref{Corollary:1}]
This is a direct consequence of Theorem~\ref{Theorem:A} once the
irreducibility of $P$ implies (by Lê-Saito's theorem) that the
non-singular fibers $(P=c), c \ne 0$ are simply-connected. On the
other hand, since $P$ is homogeneous, the existence of local
solutions for the cohomology equation $d\omega_1 \wedge dP=0$ of the
form $\omega_1=a_1 dP + dh_1$ with $a_1, h\in \mathcal O_n$ implies,
as in the above given proof of Theorem~\ref{Theorem:A},  for
$\omega_1$ polynomial 1-form of degree $\leq \nu$, the existence of
solutions with $a_1=const$ and $h_1\in \mathbb C[x_1,...,x_n]$ of
degree $\leq \nu+1$. We then proceed as in the proof of
Theorem~\ref{Theorem:A}.

\end{proof}

\begin{proof}[Proof of Theorem~\ref{Theorem:B}]
We write $\Omega= dP + \sum\limits_{j=\nu+1} ^\infty \Omega_j$ where
$\Omega_j$ is a homogeneous 1-form of degree $j \geq \nu+1$. Then,
we consider the maps $\sigma _t \colon \mathbb C^n \to \mathbb C^n$
given by $\sigma_t(z)=tz, \, t \in \mathbb C, \, z \in \mathbb C^n$.
 We then put
$\omega_t:=\frac{1}{t^{\nu+1}} \sigma_t^*(\Omega)$. Because $\Omega
\wedge d\Omega=0$ we also have $\omega_t \wedge d \omega_t=0$ for
each $t \in \mathbb C$. Notice that for each $t \ne 0$ we have
$\sigma_t$ an automorphism of $\mathbb C^n$ and also $\sigma_t ^*
(\Omega) = t^{\nu+1} dP + \sum\limits_{j=\nu+ 1}^\infty \sigma^* _t
(\Omega_j) = t^{\nu+1}[dP + t\sum\limits_{j=\nu+1} ^\infty t^{j+1 -
(\nu +2)} \Omega_j]$.
\begin{Claim}
The family of 1-forms $\omega_t$ defines an analytic deformation of
$\omega_0=dP_{\nu+1}$ by integrable 1-forms, such that
$\omega_{t=1}=\Omega$. If $\Omega$ is polynomial  of degree $\mu$
then the same holds for each $\omega_t$.
\end{Claim}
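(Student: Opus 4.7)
The plan is to unwind the definition $\omega_t = t^{-(\nu+1)}\sigma_t^*(\Omega)$ using the homogeneity of each graded piece of $\Omega$, obtain an explicit power series in $t$ with no singularity at $t=0$, and then propagate integrability from $t\neq 0$ (where $\sigma_t$ is an automorphism) to $t=0$ by analytic continuation.

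First I would record the identity $\sigma_t^*\eta = t^{j+1}\eta$ for any holomorphic 1-form $\eta$ whose coefficients are homogeneous of degree $j$, because $\sigma_t^*(a(z)\,dz_i) = a(tz)\,d(tz_i) = t^{j+1}a(z)\,dz_i$. Applied to $\Omega = dP_{\nu+1} + \sum_{j\geq \nu+1}\Omega_j$, in which $dP_{\nu+1}$ is a 1-form of degree $\nu$, this gives
\[
\sigma_t^*\Omega \;=\; t^{\nu+1}\Bigl(dP_{\nu+1} + \sum_{j\geq \nu+1} t^{j-\nu}\Omega_j\Bigr),
\]
so
\[
\omega_t \;=\; dP_{\nu+1} + \sum_{j\geq \nu+1} t^{j-\nu}\Omega_j.
\]
The right-hand side is manifestly analytic in $t$ near $0$ (convergence in $z$ being inherited from $\Omega$), and $\omega_0 = dP_{\nu+1}$ and $\omega_{t=1}=\Omega$ can be read off directly. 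If moreover $\Omega$ is polynomial of degree $\mu$, then the sum terminates and each $\Omega_j$ is polynomial in $z$, so $\omega_t$ is polynomial in $z$ of degree $\leq \mu$ for every $t$.

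For integrability, the argument is one line. For $t\neq 0$, $\sigma_t$ is a biholomorphism of $\mathbb{C}^n$ and pullback commutes with $\wedge$ and with $d$, hence
\[
\omega_t\wedge d\omega_t \;=\; \frac{1}{t^{2(\nu+1)}}\sigma_t^*(\Omega\wedge d\Omega) \;=\; 0.
\]
Since $\omega_t$ is analytic in $t$ at $t=0$ by the previous step, so is $\omega_t\wedge d\omega_t$; vanishing on the punctured disc forces vanishing at $t=0$ as well.

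The only real subtlety is the legitimacy of the division by $t^{\nu+1}$, i.e., the analyticity of the family at $t=0$. This is exactly what the homogeneity identity in the first step delivers: the $(\nu+1)$-fold vanishing of $\sigma_t^*\Omega$ in $t$ is forced by the fact that the lowest-order jet of $\Omega$ is $dP_{\nu+1}$. Once this obstruction is disposed of, the remaining assertions are either a direct reading off of the explicit expansion or an immediate pullback computation.
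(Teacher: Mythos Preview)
Your proof is correct and follows essentially the same approach as the paper: the paper computes $\sigma_t^*(\Omega)=t^{\nu+1}[dP+t\sum_{j\geq\nu+1}t^{j+1-(\nu+2)}\Omega_j]$ just before stating the Claim and notes that $\Omega\wedge d\Omega=0$ forces $\omega_t\wedge d\omega_t=0$, leaving the rest implicit. You have simply spelled out the homogeneity identity $\sigma_t^*\eta=t^{j+1}\eta$ and the analytic-continuation step at $t=0$ more carefully, which is fine.
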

Thanks to the main result in \cite{Ce-Sc} we conclude that
$\omega_t$ has a holomorphic first integral for $t \approx 0$ in a
neighborhood of the origin $0 \in \mathbb C^n$. Indeed, there is a
holomorphic function $F(z,t)$ defined in a neighborhood of $(0,0)\in
\mathbb C^n \times \mathbb C$ such that:
\begin{enumerate}[{\rm(I)}]
\item $d_zF(z,t)\wedge \omega_t=0$
\item $F(z,0)=P(z)$.
\end{enumerate}
Now we observe that $\sigma_t\in \Aut(\mathbb C^n,0), \forall t \ne
0$. Therefore we conclude the existence of $F(z,t)$ for all $t \in
\mathbb C$ and in particular for $t=1$. Since $\sigma_1(z)=z$ we get
a holomorphic first integral for $\omega_{t=1} = \Omega$ in a
neighborhood of the origin.
\end{proof}

\section{Degree one deformations: proof of Theorem~\ref{Theorem:C}}
\label{section:degreeone} We now turn our attention to the affine
deformation case $\omega_t=df + t \omega_1$ when $\omega_1$ is
polynomial of degree $\leq \deg(f)$ where $f$ is homogeneous as in
Theorem~\ref{Theorem:A}. Given $f=f_1...f_{r+1}$ a product of
relatively prime
 irreducible  homogeneous polynomials $f_j
 \in \mathbb C[x_1,...,x_n]$ of degree $\deg(f)=\nu+1$.
 Assume also that the induced germ $f \in \mathcal O_n$ has
only normal crossings singularities off a codimension $\geq 3$
subset. Let $\theta_1,...,\theta_r$ be a set of generators of the
1-homology of the fibers $L_c: (f=c)$ as in
Lemma~\ref{Lemma:1homology} (see also the proof of
Proposition~\ref{Proposition:relativecohomologylocal}).

For the case where $ r \geq 2$ we may choose the coefficients of
$\theta_j, j=1,...,r$,  in such a way that if we have a holomorphic
function $h\in \mathcal O_n$ such that $dh \wedge \theta _j =0$ for
some $j$ then $h$ is constant. Nevertheless, this does not matter in
the following proof where we consider $r=1$.

\begin{proof}[Proof of Theorem~\ref{Theorem:C}]
We shall now prove Theorem~\ref{Theorem:C}. We start with the more
general framework, ie., with $f=f_1\ldots f_{r+1}$ because we want
to make some considerations about this case also.

Recall that the 1-forms $\omega_t$ are not necessarily homogeneous.
From the first deformation equation $d\omega_1 \wedge df=0$ where
$f=f_1\ldots f_{r+1}$ we get via
Proposition~\ref{Proposition:omega1df}
\[
\omega_1 = a_1 df + dh + \sum\limits_{j=1} ^{r} \lambda_j f\theta_j
\]
for some constants $a_0,\lambda_j \in \mathbb C$ and some polynomial
$h$ of degree $\deg(h) \leq \nu+1$. Since we may take
$\theta_j=\frac{df_j}{f_j}$ we obtain

\[
\omega_1 = a_1 df + dh + \sum\limits_{j=1} ^{r} \lambda_j f
\frac{df_j}{f_j}
\]

Let us now focus on  the  case $r=1$, i.e., $f=f_1 f_2$. Write
$\omega_1 = a_1 df + dh + \lambda f \theta$ with $a_1, \lambda \in
\mathbb C$, $\deg(h) \leq \nu+1$ and $\theta= \theta_1 = df_1/f_1$.
The second deformation equation $\omega_1 \wedge d \omega_1 =0$
gives
\[
(a_1 df + dh + \lambda f \theta)\wedge (\lambda df \wedge \theta)=0.
\]
From this last equation we obtain the equivalent equation
\[
\lambda dh \wedge df \wedge \theta =0.
\]
Let us investigate the solutions to this last equation.

$\bullet$ If $\lambda=0$ then $\omega_1 = a_1 df + dh$ and $\omega_t
= df + td(a_1f + h)= df + td \tilde h= d(f + t\tilde h)$ as in (a)
in Theorem~\ref{Theorem:C}.

$\bullet$ Assume now that $\lambda \ne 0$. In this case we have
\[
d(h\theta) \wedge df=dh \wedge df \wedge \theta=0.
\]
Notice that
\[
\omega_t = df + t ( a_1 df + dh + \lambda f \theta) = (1 + a_1t) df
+ t dh + t \lambda f \theta
\]
From $d(h\theta) \wedge df=0$ we have $dh \wedge
\frac{df_1}{f_1}\wedge \frac{df_2}{f_2}=0$. Therefore
\[
dh \wedge df_1 \wedge df_2=0.
\]

Since $(f_1,f_2)$  satisfies the factorization property, $dh \wedge
df_1 \wedge df_2=0$ implies that $h=h_1(f_1,f_2)$ for some
polynomial $h_1 \in \mathbb C[x,y]$ in two variables with
$h_1(0,0)=0$. Let $\nu_j=\deg (f_j), \, j=1,2$. Then $f=f_1f_2$ has
degree $\nu+1=\nu_1 + \nu_2$\,. Since $\deg (h) \leq \nu+1$ we must
have $h_1(x,y)=\sum\limits_{i=1}^ {\nu_2} a_ i x ^i +
\sum\limits_{j=1} ^{\nu_1} b_j y^j + c xy$ for some constants $a_i,
b_j, c \in \mathbb C$. Let now $\sigma \colon \mathbb C^n \to
\mathbb C^2$ given by $\sigma(z)= (f_1(z),f_2(z))$. Then we can
write $\omega_t = \sigma^*(\alpha_t)$ where $\alpha_t$ is the
one-parameter family of two variables 1-forms given by
\[
\alpha_t = d(xy) + t ( a_1 d(xy) + dh_1(x,y) + \lambda y dx) = (1 +
t \mu)d(xy) + t d(\sum\limits_{i=1}^ {\nu_2} a_ i x ^i +
\sum\limits_{j=1} ^{\nu_1} b_j y^j) + t\lambda y dx
\]
\[
=(1 + t \mu)d(xy) + td(P(x) + Q(y))  + t\lambda y dx
\]
for some $\lambda, \, \mu \in \mathbb C$ and polynomials $P(x)\in
\mathbb C[x], \, Q(y) \in \mathbb C[y]$ of degree $\deg P \leq
\nu_2, \, \deg Q \leq \nu_1$.

\end{proof}

\begin{Remark}
{\rm  If we consider  degree one deformations as in
Theorem~\ref{Theorem:C} but with $f$ of the form
$f=f_1f_2f_3...f_{r+1}$ with $r \geq 2$ then we have to study the
solutions of the equation
\[
dh \wedge df \wedge \sum\limits_{j=1}^r \lambda_j \frac{df_j}{f_j}=0
\]
as it comes from the proof of Theorem~\ref{Theorem:C}. }
\end{Remark}

\bibliographystyle{amsalpha}

\end{document}